\documentclass[dvips,preprint]{article}
\usepackage[latin1]{inputenc}
\usepackage{amsmath, amssymb, amsthm}
\usepackage[dvips]{graphicx}
\usepackage{mathrsfs}
\usepackage{showlabels}
\numberwithin{equation}{section}

\newcommand{\bcov}{\operatorname{\mathbb{C}ov}}
\newcommand{\bvar}{\operatorname{\mathbb{V}ar}}

\newcommand{\Id}{\operatorname{Id}}

\newcommand{\ind}[1]{\mathbf{1}_{#1}}
\newtheorem{theo}{Theorem}[section]
\newtheorem{pr}{Proposition}[section]
\newtheorem{cor}{Corollary}[section]
\newtheorem{lem}{Lemma}[section]

\newtheorem{rmk}{Remark}[section]

\begin{document}

\title{Ruelle-Perron-Frobenius operator approach to the annealed pinning model with Gaussian long-range correlated disorder}
\author{Julien Poisat \footnotemark[1]}

\footnotetext[1]{Mathematical Institute, Leiden University, P.O. Box 9512, 2300 RA Leiden, The Netherlands.\\
E-mail: poisatj@math.leidenuniv.nl\\
This work was supported by the french ANR project MEMEMO2 10--BLAN--0125--03.\\
}
\maketitle

\begin{abstract}
In this paper we study the pinning model with correlated Gaussian disorder. The presence of correlations makes the annealed model more involved than the usual homogeneous model, which is fully solvable. We prove however that if the disorder correlations decay fast enough then the annealed critical behaviour is the same as the homogeneous one. Our result is sharper if the decay is exponential. The approach we propose relies on the spectral properties of a transfer or Ruelle-Perron Frobenius operator related to the model. We use results on these operators  that were obtained in the framework of the thermodynamic formalism for countable Markov shifts. We also provide large-temperature asymptotics of the annealed critical curve under weaker assumptions.\\
\\
{\bf Keywords:} Pinning model; correlated disorder; free energy; annealed; critical exponent; Harris criterion; Ruelle-Perron-Frobenius operator; countable Markov shifts; phase transition; subadditivity.\\
\\
{\bf AMS subject classification numbers:} 82B27; 82B44; 60K05; 37D35.\\
\end{abstract}

\vspace{1.5cm}
The random pinning model applies to various situations such as localization of a polymer on a defect line, wetting transition and DNA denaturation, which all display a transition between a localized phase and a delocalized phase. In the last past years considerable progress has been made on the understanding of the critical phenomena in the case of an environment constituted of independent and identically distributed (i.i.d) random variables \cite{Alexander_Sido, Alexander_quenched, Alexander_loop_exp_one, Cheliotis_DenH, MR2779401, MR2231963, MR2561435, Lacoin_martingale, Fabio_replica, 1157.60090}, and the correlated case is now under investigation \cite{BergerLacoin, 2011arXiv1110.5781B, poisat_preprint_relevance, 0903.3704v3}. Here we study the critical properties of the annealed (ie averaged over disorder) system when disorder is a Gaussian correlated sequence, and we prove in particular that if the decay of correlations is fast enough then the annealed free energy at criticality behaves as in the homogeneous (i.e nonrandom) model. To this end, we show a connection between our model and the study of a transfer operator on infinite sequences. We then use results on its spectral property coming from the theory of thermodynamic formalism for countable Markov shifts \cite{MR1955261, MR1738951, MR2249785, MR1853808}, generalizing thereby the approach of \cite{0903.3704v3} that was used to deal with correlations with finite range.

This paper is organized as follows: in a first part we recall the general model and standard definitions, and we give a brief account on what is known in the literature about critical features of the model. In Section 2, we specify the model, compute the annealed partition functions for correlated Gaussian disorder and state our main results. Section 3 contains the definitions and theorems on countable Markov shifts that we use in Section 4 to prove Theorems \ref{critical_sum} and \ref{critical_exp}. Section 5 contains the proof of Theorem \ref{asympt} and can be read independently of Sections 3 and 4.

\section{Introduction}

\paragraph{The general model.}
We first present the random pinning model in a general setting. The reader can also refer to \cite{GG_Book, Toninelli_Survey} and \cite[Chapters 7 and 11]{DenH_Book}. Let $(T_n)_{n\geq1}$ be a sequence of i.i.d random variables in $\mathbb{N}^*$ (the interarrival times), with law denoted by $P$. We suppose that 
\begin{equation}\label{defK}
 K(n) := P(T_1 = n) = L(n)n^{-(1+\alpha)} >0
\end{equation}
with $\alpha\geq0$ and $L$ a slowly varying function (see \cite{MR1015093}). We denote by $\tau$ the renewal process defined by $\tau_0 = 0$ and $\tau_{n}-\tau_{n-1} = T_n$ for $n\geq1$, and define the random variables
\begin{align*}
\delta_n &:= \ind{\{n\in\tau\}} = \left\{ \begin{array}{cl} 1 & \mbox{if $\tau_k = n$ for some $k\geq0$}\\0 & \mbox{otherwise} \end{array} \right. ,\\
\imath_n &:= \sum_{k=1}^n \delta_k.
\end{align*}
We suppose that $\tau$ is recurrent in the sense that $\sum_{n\geq1}K(n)=1$. Let $\omega = (\omega_n)_{n\geq0}$ be a sequence of real random variables, independent of $\tau$. Its law is denoted by $\mathbb{P}$. The pinning probability measure in quenched environnment $\omega$, at size $n\geq1$ and parameters $\beta\geq0$ (the inverse temperature) and $h\in\mathbb{R}$ (the pinning parameter) is defined by
\begin{equation}\label{new_law}
 \frac{dP_{n,\beta,h}^{\omega}}{dP} := \frac{1}{Z_{n,\beta,h}^{\omega}}\exp\left( H_{n,\beta,h}^{\omega}  \right) \delta_n
\end{equation}
where
\begin{equation*}
 H_{n,\beta,h}^{\omega} := \sum_{k=1}^n (\beta\omega_k + h)\delta_k
\end{equation*}
is the Hamiltonian function and 
\begin{equation}\label{part_fct}
 Z_{n,\beta,h}^{\omega} := E\left( \exp\left( H_{n,\beta,h}^{\omega} \right)\delta_n\right)
\end{equation}
is the quenched partition function. Define the finite volume quenched free energies by
\begin{equation*}
\forall n\geq 1, \quad F^{\omega}_{n,\beta,h} := (1/n)\log Z_{n,\beta,h}^{\omega}.
\end{equation*}
The following result is now standard (see \cite[Theorem 4.6]{GG_Book}) and can be proved with Kingman's ergodic subadditivity theorem:
\begin{theo}
 If $\omega$ is stationary ergodic and $\mathbb{E}(|\omega_0|)<+\infty$, then the sequence $(F_{n,\beta,h}^{\omega})_{n\geq1}$ converges $\mathbb{P}-$a.s and in $L^1(\mathbb{P})$ to a nonnegative and nonrandom quantity called (infinite volume) quenched free energy and denoted by $F(\beta,h)$. Moreover, $F(\beta,h) = \sup_{n\geq1} \mathbb{E}F^{\omega}_{n,\beta,h}$.
\end{theo}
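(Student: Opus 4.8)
The plan is to recognize this as the standard free-energy existence statement and to deduce it from Kingman's subadditive ergodic theorem, applied on the probability space $(\mathbb{R}^{\mathbb{N}},\mathbb{P})$ equipped with the left shift $\theta$, $(\theta\omega)_j := \omega_{j+1}$, which is measure-preserving and ergodic precisely because $\omega$ is stationary and ergodic. The object to feed into Kingman's theorem is $Y_n := -\log Z_{n,\beta,h}^\omega$, and the first and only model-specific step is to show that $(\log Z_{n,\beta,h}^\omega)_{n\geq1}$ is a superadditive cocycle, i.e.
\[ Z_{n+m,\beta,h}^\omega \;\geq\; Z_{n,\beta,h}^\omega\, Z_{m,\beta,h}^{\theta^n\omega} \qquad (n,m\geq1), \]
equivalently $Y_{n+m}\leq Y_n + Y_m\circ\theta^n$. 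To get this I would restrict the expectation defining $Z_{n+m,\beta,h}^\omega$ to the event $\{n\in\tau\}=\{\delta_n=1\}$, split the Hamiltonian as $H_{n+m,\beta,h}^\omega = H_{n,\beta,h}^\omega + \sum_{j=1}^m(\beta\omega_{n+j}+h)\ind{\{n+j\in\tau\}}$, condition on $\sigma(\tau\cap[0,n])$, and invoke the renewal property of $\tau$ at the deterministic epoch $n$: the conditional expectation of the contribution coming from $(n,n+m]$ then equals $Z_{m,\beta,h}^{\theta^n\omega}$, which depends on $\omega$ only and factors out of the $\tau$-expectation. Note that it is crucial here that the partition function carries the endpoint constraint $\delta_{n+m}$; its unconstrained version is not superadditive.

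Next I would collect the integrability inputs. For the lower bound, keeping only the single-jump trajectory $\{\tau_1=n\}$ gives $Z_{n,\beta,h}^\omega\geq e^{\beta\omega_n+h}K(n)>0$, so $\log Z_{n,\beta,h}^\omega$ is a.s. finite and, taking $\mathbb{E}$ with stationarity, $\mathbb{E}\log Z_{n,\beta,h}^\omega\geq \beta\mathbb{E}\omega_0+h+\log K(n)$. For the upper bound, since $H_{n,\beta,h}^\omega\leq\sum_{k=1}^n(\beta\omega_k+h)^+\leq\beta\sum_{k=1}^n|\omega_k|+n|h|$ — a bound that does not involve $\tau$ at all — one gets $Z_{n,\beta,h}^\omega\leq\exp(\beta\sum_{k=1}^n|\omega_k|+n|h|)$ and hence $\mathbb{E}\log Z_{n,\beta,h}^\omega\leq n(\beta\mathbb{E}|\omega_0|+|h|)$. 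In particular $\log Z_{n,\beta,h}^\omega\in L^1(\mathbb{P})$ for each $n$ (for $n=1$, $\log Z_{1,\beta,h}^\omega=\beta\omega_1+h+\log K(1)$), so $\mathbb{E}(Y_1^+)<\infty$ as required by Kingman's theorem, and $\inf_n\tfrac1n\mathbb{E}Y_n\geq -(\beta\mathbb{E}|\omega_0|+|h|)>-\infty$.

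Then the conclusion follows mechanically: Kingman's theorem yields $Y_n/n\to\inf_n\tfrac1n\mathbb{E}Y_n$ $\mathbb{P}$-a.s., the limit being $\theta$-invariant, hence a.s. constant by ergodicity; since it is finite by the previous step, the convergence also holds in $L^1(\mathbb{P})$. Defining $F(\beta,h):=\sup_{n\geq1}\tfrac1n\mathbb{E}\log Z_{n,\beta,h}^\omega=\sup_{n\geq1}\mathbb{E}F^\omega_{n,\beta,h}$ (a supremum, not merely a limit, by Fekete applied to the superadditive sequence $n\mapsto\mathbb{E}\log Z_{n,\beta,h}^\omega$), we obtain $F^\omega_{n,\beta,h}=-Y_n/n\to F(\beta,h)$ a.s. and in $L^1(\mathbb{P})$. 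Nonnegativity comes from the lower bound above: $\mathbb{E}F^\omega_{n,\beta,h}\geq\tfrac1n(\beta\mathbb{E}\omega_0+h+\log K(n))$, and because $K(n)=L(n)n^{-(1+\alpha)}$ with $L$ slowly varying we have $\log K(n)=o(n)$, so the right-hand side tends to $0$; therefore $F(\beta,h)=\sup_n\mathbb{E}F^\omega_{n,\beta,h}\geq0$.

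The main obstacle — modest, since the scheme is classical — is really twofold: getting the cocycle identity of the first step exactly right (the renewal decomposition at a fixed epoch, and the use of the endpoint constraint $\delta_{n+m}$), and ensuring that the $L^1$ convergence and the finiteness of $F(\beta,h)$ survive under the sole hypothesis $\mathbb{E}|\omega_0|<\infty$, for which the $\tau$-free deterministic bound $H_{n,\beta,h}^\omega\leq\beta\sum_{k=1}^n|\omega_k|+n|h|$ is the decisive ingredient.
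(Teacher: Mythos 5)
Your proof is correct and follows exactly the route the paper points to: the paper states the result without proof, citing Giacomin's book (Theorem 4.6) and noting it "can be proved with Kingman's ergodic subadditivity theorem," which is precisely what you do — the superadditivity via the renewal property at the pinned epoch $n$, the $L^1$ control from the crude bound $H_{n,\beta,h}^\omega\le\beta\sum_k|\omega_k|+n|h|$, and the nonnegativity from $\log K(n)=o(n)$ are all the standard ingredients. No discrepancy with the paper's intent.
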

\begin{rmk}\label{pinned_free}
 The constraint $\delta_n$ appearing at the right of Equations (\ref{new_law}) and (\ref{part_fct}) says that we look at realizations of $\tau$ such that $n\in\tau$. In this case, one speaks of \textit{pinned} or \textit{constraint} partition functions. This constraint can be safely removed (in which case one speaks of \textit{free} partition functions) without modification of the infinite volume free energy, see \cite[Remark 1.2]{GG_Book}.
\end{rmk}
Note that by suitably shifting $h$ one can assume that $\omega_0$ is centered without losing in generality. The localized and delocalized phases of the system are respectively defined by
\begin{align*}
 \mathscr{L} &= \{(\beta,h)\in \mathbb{R}_+ \times \mathbb{R} : F(\beta,h) > 0\},\\
  \mathscr{D} &= \{(\beta,h)\in \mathbb{R}_+ \times \mathbb{R} : F(\beta,h) = 0\}.
\end{align*}
For a fixed $\beta$, the quenched free energy is nondecreasing (and convex) in $h$. Therefore, a phase transition occurs at the quenched critical point (when it is finite)
\begin{equation*}
 h_c(\beta) := \sup\{ h : F(\beta,h) = 0\}.
\end{equation*}
The quenched critical exponent, which governs the behaviour of the quenched free energy at the neighbourhood of $h_c(\beta)$, is another important critical feature of the model.

\begin{rmk}
 If one wants to see the delocalization transition as a denaturation transition for DNA, then the $(T_n)_{n\geq1}$ would (more or less) stand for the lengths of denaturation loops in a double-stranded DNA molecule and $(\omega_n)_{n\geq0}$ for the nucleotides sequence. The presence of long-range correlations in these sequences makes the study of the correlated case relevant. See \cite[Section 1.4]{GG_Book} for a more detailed description of the relation between the pinning model and Poland-Scheraga models.
\end{rmk}

\paragraph{Homogeneous case.} Let us now briefly recall the critical features of the homogeneous model ($\beta=0$), which is fully solvable. One can prove that $h_c(0)=0$ and (see \cite[Theorem 2.1]{GG_Book} for a more precise statement)
\begin{theo}\label{hom_crit_exp}
 For every choice of $\alpha\geq0$ and $L$ in (\ref{defK}), there exists a slowly varying function $\tilde{L}$ such that
 \begin{equation*}
  F(0,\delta) \stackrel{\delta\searrow0}{\sim} \delta^{\max(1,1/\alpha)}\tilde{L}(1/\delta).
 \end{equation*}
Moreover, $\lim_{\delta\searrow}\tilde{L}(1/\delta) = 1/m$ when $m := \sum_{n\geq1} nK(n) < +\infty$.
\end{theo}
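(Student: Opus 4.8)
The plan is to exploit the fact that at $\beta=0$ the constrained partition function has an explicit renewal representation, so that $F(0,\delta)$ is the root of a single scalar equation whose behaviour as $\delta\searrow0$ is governed by the regular variation of the interarrival law, and then to invoke the standard machinery of regularly varying functions.

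First I would record the renewal characterization of the homogeneous free energy. From $Z_{n,0,\delta}=\sum_{k\ge1}e^{\delta k}P(\tau_k=n)$ and the generating function $\sum_{n\ge1}Z_{n,0,\delta}z^n=e^{\delta}\widehat K(z)/(1-e^{\delta}\widehat K(z))$, where $\widehat K(z):=\sum_{n\ge1}K(n)z^n$ increases continuously on $[0,1]$ to $\widehat K(1)=1$, one reads off that for $\delta>0$ the equation $e^{\delta}\widehat K(z)=1$ has a unique root $z_\delta\in(0,1)$, which is the radius of convergence of the above power series, so that $F(0,\delta)=-\log z_\delta>0$, while $F(0,\delta)=0$ for $\delta\le0$ (giving $h_c(0)=0$); this is the statement quoted from \cite[Theorem 2.1]{GG_Book}. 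Setting $F=F(0,\delta)$, this amounts to
\[
\Phi(F)=1-e^{-\delta},\qquad\Phi(F):=1-\sum_{n\ge1}e^{-Fn}K(n)=E\bigl(1-e^{-FT_1}\bigr),
\]
with $\Phi$ continuous and strictly increasing on $(0,\infty)$, $\Phi(0^+)=0$.

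The heart of the matter is the asymptotics of $\Phi$ near $0$. If $m:=\sum_{n\ge1}nK(n)<\infty$ (which holds whenever $\alpha>1$, and for $\alpha=1$ when $\sum_nL(n)/n<\infty$), dominated convergence in $\Phi(F)/F=\sum_n\frac{1-e^{-Fn}}{F}K(n)$ (the summand is $\le nK(n)$ and converges to $nK(n)$) gives $\Phi(F)\sim mF$ as $F\downarrow0$. If $m=\infty$, I would start from the identity $\Phi(F)=(1-e^{-F})\sum_{n\ge0}e^{-Fn}\bar K(n)$, with $\bar K(n):=P(T_1>n)$, and apply Karamata's theorem (\cite{MR1015093}): by (\ref{defK}), $\bar K$ is regularly varying of index $-\min(\alpha,1)$ at infinity (with $\bar K(n)\sim L(n)/(\alpha n^{\alpha})$ for $0<\alpha<1$, index $-1$ for $\alpha=1$, and slowly varying and vanishing for $\alpha=0$), so the Abelian theorem for power series makes $F\mapsto\sum_{n\ge0}e^{-Fn}\bar K(n)$ regularly varying of index $\min(\alpha,1)-1$ at $0$. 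In every case $\Phi$ is regularly varying at $0$ of index $\rho:=\min(\alpha,1)$; explicitly $\Phi(F)\sim\frac{\Gamma(1-\alpha)}{\alpha}L(1/F)F^{\alpha}$ for $0<\alpha<1$, $\Phi(F)\sim mF$ for $\alpha\ge1$ and $m<\infty$, $\Phi(F)\sim F\sum_{n\le1/F}\bar K(n)$ with the sum slowly varying and diverging for $\alpha=1$, $m=\infty$, and $\Phi(F)\sim\bar K(1/F)$ slowly varying and vanishing for $\alpha=0$.

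It remains to invert. Since $1-e^{-\delta}\sim\delta$ and $\Phi^{-1}$ is monotone and regularly varying, $F(0,\delta)=\Phi^{-1}(1-e^{-\delta})\sim\Phi^{-1}(\delta)$; for $\alpha>0$ the asymptotic inverse of a function regularly varying at $0$ of index $\rho$ is regularly varying at $0$ of index $1/\rho=\max(1,1/\alpha)$, with the slowly varying correction $\tilde L$ obtained from the de Bruijn conjugate (\cite{MR1015093}) of the slowly varying part of $\Phi$, which gives $F(0,\delta)\sim\delta^{\max(1,1/\alpha)}\tilde L(1/\delta)$; when $m<\infty$, $\Phi(F)\sim mF$ yields $\Phi^{-1}(\delta)\sim\delta/m$, i.e. $\tilde L(1/\delta)\to1/m$; and when $\alpha=0$, $\Phi$ being slowly varying and vanishing forces, for every $c>0$, $\Phi(F)\ge F^{c}$ near $0$ hence $F(0,\delta)\le\delta^{1/c}$ near $0$, i.e. $F(0,\delta)$ decays faster than any power of $\delta$ (the content of the formula when $1/\alpha=\infty$). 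I expect the real work to be in the $m=\infty$ regime: carrying the slowly varying function $L$ correctly through Karamata's Tauberian theorem and then through the asymptotic inversion, with extra care at the borderline $\alpha=1$ separating finite and infinite mean and at $\alpha=0$; everything else is routine once the regular-variation toolbox of \cite{MR1015093} is in place.
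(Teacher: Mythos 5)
The paper does not prove Theorem \ref{hom_crit_exp}; it simply cites \cite[Theorem 2.1]{GG_Book}, and your argument is precisely the standard proof lying behind that citation: the renewal/generating-function representation shows $F(0,\delta)$ solves $\Phi(F)=1-e^{-\delta}$ with $\Phi(F)=1-\sum_n K(n)e^{-Fn}=(1-e^{-F})\sum_{j\ge0}e^{-Fj}\bar K(j)$, Karamata's Abelian theorem gives regular variation of $\Phi$ at $0^+$ of index $\min(\alpha,1)$, and de Bruijn asymptotic inversion (\cite[Theorem 1.5.12]{MR1015093}) then yields the claim. Your handling of the boundary cases (dominated convergence giving $\Phi(F)\sim mF$ and hence $\tilde L\to 1/m$ when $m<\infty$; the $\alpha=1$, $m=\infty$ case; and the $\alpha=0$ faster-than-polynomial decay) is likewise correct and matches the reference.
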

Therefore, the critical exponent of the homogeneous pinning model is equal to $\max(1,1/\alpha)$.

\paragraph{I.I.D case and the annealed model.} A lot of work has been achieved recently on the critical phenomenon of pinning models in i.i.d disorder. Here we restrict ourselves to i.i.d standard Gaussian random variables, for ease of exposition and also because in this paper we investigate the case of correlated \textit{Gaussian} random variables. Most of statements in this paragraph can be extended to other laws with finite exponential moments. Let us define the annealed partition functions by
\begin{equation*}
 Z_{n,\beta,h}^a := \mathbb{E}Z_{n,\beta,h}^{\omega}.
\end{equation*}
By applying Jensen's inequality, one gets $\mathbb{E}\log Z_{n,\beta,h}^{\omega} \leq \log Z_{n,\beta,h}^a$, which yields 
\begin{align}
 F(\beta,h) &\leq F^a(\beta,h) := \lim_{n\rightarrow+\infty} (1/n)\log Z_{n,\beta,h}^a,\\
 h_c(\beta) &\geq h_c^a(\beta) := \sup\{ h : F^a(\beta,h) = 0\}.
\end{align}
These two lines are in fact always true as soon as $F^a(\beta,h)$ and $h_c^a(\beta)$, respectively called annealed free energy and annealed critical point, are well defined. This fact is easily checked here, since by direct computation we have
\begin{equation*}
 \forall n\geq 1,\quad Z^a_{n,\beta,h}  =Z_{n,0,h+\beta^2/2},
\end{equation*}
from which we infer
\begin{align}
  F^a(\beta,h) &= F(0,h+\beta^2/2)\label{ann_iid}\\
 h_c^a(\beta) &= -\beta^2/2\label{ann_cc_iid}\\
 h_c(\beta) &\geq -\beta^2/2.
\end{align}
From Equation (\ref{ann_iid}) we deduce that the annealed critical exponent is the same as the homogeneous critical exponent given by Theorem \ref{hom_crit_exp}. Furthermore, a smoothing inequality obtained in \cite{MR2231963} states that for all values of $\alpha$,
\begin{equation}\label{smoothing}
\forall \delta\geq0,\quad F(\beta,h_c(\beta)+\delta)\leq \frac{1+\alpha}{2\beta^2}\delta^2,
\end{equation}
which indicates that quenched and annealed critical exponent cannot coincide if $\alpha> 1/2$. A series of papers then proved the following dichotomy, in accordance with the Harris criterion (see \cite{harris1974effect}, \cite[Section 5.5]{GG_Book} and \cite[Section 5]{Toninelli_Survey} for a more detailed explanation):
\begin{enumerate}
 \item If $0<\alpha<1/2$: there exists $\beta_c \in (0,+\infty)$ such that the annealed and quenched critical points and exponents coincide if $\beta \in (0,\beta_c)$ (irrelevant regime) and $h_c(\beta)> h_c^a(\beta)$ if $\beta>\beta_c$ (relevant regime).
 \item If $\alpha>1/2$: for all $\beta>0$, $h_c(\beta)>h_c^a(\beta)$.
 \end{enumerate}
We let aside the special cases $\alpha=0$ (irrelevance for all $\beta>0$, see \cite{Alexander_loop_exp_one}) and $\alpha=1/2$ (marginal case). The reader can refer to \cite{Alexander_quenched, Alexander_loop_exp_one, Alexander_Sido, Fabio_replica, Lacoin_martingale, Cheliotis_DenH, 1157.60090, MR2561435, MR2779401, MR2231963} for precise results.

\paragraph{Correlated case.}
Some models with correlated disorder have been recently investigated.

In \cite{BergerLacoin}, the renewal process evolves in an environment constituted of independent strips of $0$ and $-\beta$'s, where the distribution for the size of the strips has a power-law tail with exponent $\theta>1$. This particular environment is an example of a disorder potential with long-range power-law decaying correlations. It is then proved that the critical point is $0$ for all $\beta>0$ and that the critical exponent is $\theta/\min(1,\alpha)$ (with explicit logarithmic corrections).

In \cite{2011arXiv1110.5781B}, a hierarchical pinning model with correlated disorder is studied. The choice of correlations would correspond to power-law decaying correlations if one makes the analogy with the non-hierarchical model. The authors identify three regimes: non-summable correlations (the phase transition disappears), fast-decaying correlations (the annealed exponent is the same as in the homogeneous model and the (ir)relevance criterion for disorder is as in the i.i.d case) and a third regime where the annealed critical exponent is modified.

The random pinning model in the case of Gaussian disorder with finite-range correlations has been studied in \cite{poisat_preprint_relevance} and \cite{0903.3704v3}. In this case, the annealed partition function is no more a homogeneous partition function. However, one can write it (approximately) like the homogeneous partition function for a Markov renewal process (depending on $\beta$) with a shift on the pinning paramater, and see the Markov renewal process as a renewal process with a finite range memory (somehow the dependence structure of the disorder sequence is transferred to the renewal process upon annealing). The method that we use to prove Theorems \ref{critical_sum} and \ref{critical_exp} is a natural but not straightforward extension to the infinite-range case of the techniques used for the finite-range case, which rely on Perron-Frobenius tools. Furthermore, in the finite-range case the Harris criterion is not modified.

\section{Model and results}

Our results deal with the annealed critical features of the random pinning model with Gaussian disorder under some tail assumptions on the correlations.

 \paragraph{Assumptions and preliminaries.} In the following $\omega = (\omega_n)_{n\geq0}$ is a stationary sequence of Gaussian random variables satisfying 
\begin{equation*}
 \mathbb{E}(\omega_0) = 0 \quad \mbox{ and } \quad \mathbb{E}(\omega_0^2) = 1.
\end{equation*}
We define
\begin{equation*}
 \forall n\geq1 \quad \rho_n = \bcov(\omega_0,\omega_n).
\end{equation*}
 The first step is to compute the annealed partition function. We have
\begin{equation}\label{ann_part_fct}
 Z_{n,\beta,h}^a = E\left[ \exp\left(\left(h+\frac{\beta^2}{2}\right)\sum_{k=1}^n \delta_k + \beta^2 \sum_{1\leq k<l\leq n}\rho_{k-l}\delta_k \delta_l \right) \delta_n \right].
\end{equation}
Indeed, this a direct consequence of 
\begin{align*}
 \bvar\left(\sum_{k=1}^n \omega_k \delta_k \right) &= \sum_{1\leq k , l \leq n} \rho_{|k-l|} \delta_k \delta_l \\
 &= \sum_{k=1}^n \delta_k + 2 \sum_{1\leq k < l \leq n} \rho_{k-l} \delta_k \delta_l.
\end{align*}
and of the fact that $\sum_{k=1}^n \omega_k \delta_k$ is a Gaussian random variable conditionally on $\tau$. The second term in the exponential in Equation (\ref{ann_part_fct}) makes this model more complicated than homogeneous pinning.

We now prove the existence of the annealed free energy under weak tail assumptions on the correlations.
\begin{pr}\label{existence}
If $\sum |\rho_n| <\infty$ then the annealed free energy 
\begin{equation*}
 F^a(\beta,h) = \lim_{n\rightarrow +\infty} (1/n) \log Z^a_{n,\beta,h}
\end{equation*}
exists and is finite.
\end{pr}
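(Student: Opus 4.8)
The plan is to establish existence of the limit $F^a(\beta,h) = \lim_n (1/n)\log Z^a_{n,\beta,h}$ via a superadditivity argument on the pinned partition functions, just as in the homogeneous case, and then to control the extra correlation term $\beta^2\sum_{k<l}\rho_{k-l}\delta_k\delta_l$ by absolute summability so that it contributes only a bounded perturbation. Write $a_n := \log Z^a_{n,\beta,h}$. Using the renewal structure, decompose a trajectory with $n,m \in \tau$ into its first part up to $n$ and a shifted copy of a trajectory of length $m$; this gives, by restricting the outer expectation in (\ref{ann_part_fct}) to configurations that hit $n$,
\begin{equation*}
Z^a_{n+m,\beta,h} \;\geq\; Z^a_{n,\beta,h}\, Z^a_{m,\beta,h}\,\exp\!\Big(-\,\beta^2 \sum_{k\leq n < l \leq n+m} |\rho_{k-l}|\Big),
\end{equation*}
since splitting $\sum_{1\le k<l\le n+m}\rho_{k-l}\delta_k\delta_l$ into the pairs inside $[1,n]$, inside $(n,n+m]$, and straddling pairs reproduces the two sub-system Hamiltonians plus a cross term that is bounded below by $-\beta^2\sum_{k\le n<l}|\rho_{k-l}|$.

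The key observation is that the straddling sum is bounded uniformly in $n$ and $m$: $\sum_{k\leq n<l\leq n+m}|\rho_{k-l}| \leq \sum_{k\geq 1} k\,|\rho_k|$? — no, more carefully, $\sum_{k\le n<l}|\rho_{l-k}| = \sum_{d\ge 1} |\rho_d|\,\#\{(k,l): k\le n<l,\ l-k=d\} \le \sum_{d\ge1} d\,|\rho_d|$, which need not be finite under mere summability. So instead I would use the cruder bound $\sum_{k\le n < l \le n+m}|\rho_{l-k}| \le \sum_{j\ge 1} \big(\sum_{d\ge j}|\rho_d|\big) =: C$, and observe that summability of $(|\rho_n|)$ does not immediately give $C<\infty$ either. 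The correct route is: for fixed $m$, $\sum_{k\le n<l\le n+m}|\rho_{l-k}| \le m \sum_{d\ge 1}|\rho_d| =: mC_0 <\infty$. This yields
\begin{equation*}
a_{n+m} \;\geq\; a_n + a_m - \beta^2 C_0\, m,
\end{equation*}
which is not quite superadditive but becomes so after the correction $\tilde a_n := a_n - (\text{affine term})$; alternatively one symmetrises to get $a_{n+m}\ge a_n+a_m-\beta^2 C_0\min(n,m)$ and appeals to a standard near-superadditivity lemma (e.g. de Bruijn–Erdős / Hammersley). Either way $\lim_n a_n/n$ exists in $(-\infty,+\infty]$.

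Finiteness of the limit then follows from an upper bound: bounding $\sum_{k<l}\rho_{k-l}\delta_k\delta_l \le (\sum_{d\ge1}|\rho_d|)\sum_k\delta_k = C_0 \imath_n$ pointwise gives $Z^a_{n,\beta,h}\le E[\exp((h+\beta^2/2+\beta^2 C_0)\imath_n)\delta_n] = Z_{n,0,h+\beta^2/2+\beta^2 C_0}$, whose free energy is finite by Theorem \ref{hom_crit_exp}; hence $\limsup_n a_n/n<\infty$. I expect the main obstacle to be the bookkeeping in the superadditive decomposition — in particular getting an error term that is $o(n)$ uniformly, which forces the $\min(n,m)$ (or per-block constant) form rather than a genuine superadditive inequality, and then invoking the appropriate relaxed version of Fekete's lemma. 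The rest is routine.
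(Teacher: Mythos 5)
Your overall strategy (superadditivity of the pinned partition function, control of the straddling correlation term, an approximate-subadditive lemma, plus the trivial upper bound for finiteness) is exactly the paper's, but there is a genuine gap at the place you yourself flag: the bound you settle on for the straddling sum is too crude to feed into any of the relaxed Fekete-type lemmas. Bounding $\sum_{k\le n<l\le n+m}|\rho_{l-k}|$ by $C_0\min(n,m)$ gives an error term that can be as large as $C_0(n+m)/2$; in Hammersley's lemma (Lemma \ref{lemmaHammersley}) this corresponds to $\Delta(r)\asymp r$, for which $\sum_r \Delta(r)/(r(r+1))$ diverges logarithmically, and the de Bruijn--Erd\H{o}s integrability condition $\int^\infty g(t)/t^2\,dt<\infty$ fails for the same reason. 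An affine correction $\tilde a_n=a_n-bn$ does not help either, since it leaves the $C_0\min(n,m)$ error untouched. So as written the argument does not close.

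The missing observation is a sharper bound on the straddling sum. Writing $i=l-k$ and counting pairs, the number of straddling pairs at lag $i$ is at most $\min(i,m,n,(n+m-i)^+)$, hence in particular at most $i$ and zero for $i\ge n+m$. This gives
\begin{equation*}
\sum_{k\le n<l\le n+m}|\rho_{l-k}| \;\le\; \sum_{i=1}^{n+m}i\,|\rho_i| \;=:\; \Delta_{n+m},
\end{equation*}
and $\Delta$ is non-decreasing in its index, so the error is genuinely a function of $n+m$. One then checks
\begin{equation*}
\sum_{r\ge1}\frac{\Delta_r}{r(r+1)} \;=\; \sum_{i\ge1} i|\rho_i|\sum_{r\ge i}\frac{1}{r(r+1)} \;=\; \sum_{i\ge1}|\rho_i| \;<\;\infty,
\end{equation*}
which is exactly Hammersley's hypothesis. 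This is precisely what the paper does (its intermediate expression $\sum_{k=1}^n\sum_{i\ge k}|\rho_i|$ should be read as a step toward this, and what it ultimately uses is $\Delta_n=\sum_{k\le n}k|\rho_k|$ together with monotonicity $\Delta_m\le\Delta_{n+m}$). Note that the correct $\Delta$ is still $o(n)$ by Kronecker's lemma, so your instinct that the error must be $o(n)$ was right; but the summability condition is strictly stronger than $o(n)$, and the weighted bound $\sum_{i\le n+m}i|\rho_i|$ is what actually delivers it. Your upper bound giving finiteness of the free energy is correct and matches the paper's.
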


\begin{proof}[Proof of Proposition \ref{existence}]
 We fix $\beta$ and $h$ and write $Z_n^a$ as a shortcut for $Z_{n,\beta,h}^a$. Let us define
 \begin{equation*}
  \Delta_n = \sum_{k=1}^n \sum_{i\geq k} |\rho_i| = \sum_{k=1}^n k|\rho_k|.
 \end{equation*}
By restricting the partition function to paths with $\delta_n = 1$ and using Markov property we get
\begin{equation*}
 Z^a_{n+m} \geq Z^a_n Z^a_m \exp\left( -\beta^2 \Delta_{m} \right)\geq Z^a_n Z^a_m \exp\left( -\beta^2 \Delta_{n+m} \right).
\end{equation*}
Since $\sum |\rho_n| < \infty$,
\begin{equation}
\frac{\Delta_n}{n(n+1)} = \frac{\Delta_n}{n} - \frac{\Delta_{n+1}}{n+1} + |\rho_{n+1}|
\end{equation}
is summable. We can conclude that the annealed free energy exists by applying the approximate subadditive lemma (see Lemma \ref{lemmaHammersley} below). Again, it is finite since
\begin{equation}
Z_n^a \leq \exp\left( n \left( h+ \frac{\beta^2}{2} + \beta^2 \sum_{k\geq1} |\rho_k| \right) \right).
\end{equation}
\end{proof}

\begin{lem}[Hammersley's approximate subadditive lemma, \cite{Hammersley}]\label{lemmaHammersley}
 Let $h : \mathbb{N} \mapsto \mathbb{R}$ such that for all $n$, $m\geq1$,
 \begin{equation*}
  h(n+m) \leq h(n) + h(m) + \Delta(n+m),
 \end{equation*}
where $(\Delta_n)_{n\geq0}$ is a non-decreasing sequence such that:
\begin{equation*}
 \sum_{r\geq 1} \frac{\Delta(r)}{r(r+1)} < +\infty.
\end{equation*}
Then the sequence $(h(n)/n)_{n\geq1}$ has a limit in $[-\infty,+\infty)$.
\end{lem}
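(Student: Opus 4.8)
The plan is to establish the inequality $\limsup_n h(n)/n\le\liminf_n h(n)/n$, which forces $h(n)/n$ to converge (the limit being automatically $<+\infty$, possibly $-\infty$); I would do this by bounding $h(n)$ in terms of the value of $h$ at a fixed large scale $m$ and checking that all the accumulated error terms are $o(n)$ up to a quantity that tends to $0$ as $m\to\infty$. First I would make two preliminary reductions. Replacing $\Delta(n)$ by $\max(\Delta(n),0)$ keeps $\Delta$ non-decreasing, keeps the series summable and only enlarges the hypothesis, so we may assume $\Delta\ge0$. Writing $\theta(m):=\sum_{r\ge m}\Delta(r)/(r(r+1))$, which decreases to $0$, the monotonicity of $\Delta$ and a comparison on dyadic blocks $[2^a,2^{a+1})$ give the two estimates that drive everything: $\Delta(n)/n\le 2\bigl(\theta(n)-\theta(2n)\bigr)\to0$, and $\sum_{a\ge1}\Delta(2^am)/(2^am)\le 2\,\theta(m)$.

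The core step is a dyadic decomposition of $n$. Fixing $m$ and a large $n$, I would write $n=2^{j_0}m+\dots+2^{j_l}m+s$ with $j_0>\dots>j_l\ge0$ and $0\le s<m$ (the binary expansion of $\lfloor n/m\rfloor$ scaled by $m$, plus a remainder). Merging the blocks one at a time via the hypothesis gives $h(n)\le\sum_i h(2^{j_i}m)+h(s)+E_n$, where $E_n$ collects the merging errors $\Delta(R_i)$ at the intermediate remainders; one has $R_0=n$ and $R_i<n/2^{i-1}$ because the $j_i$ strictly decrease, so $E_n\le 2\sum_{a\ge0}\Delta(n/2^a)$. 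Next I would bound each block by iterating the doubling inequality $h(2^am)\le 2h(2^{a-1}m)+\Delta(2^am)$ down to scale $m$, which together with the second estimate above gives $h(2^{j_i}m)\le 2^{j_i}h(m)+2\cdot2^{j_i}m\,\theta(m)$. Since $\sum_i2^{j_i}m=n-s$, these combine into
\[
 h(n)\ \le\ \frac{n-s}{m}\,h(m)+2n\,\theta(m)+h(s)+E_n .
\]

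Then I would divide by $n$ and let $n\to\infty$ with $m$ fixed: $h(s)/n\to0$ ($h(s)$ takes only finitely many values), and $E_n/n\to0$ because $\Delta(n)/n\to0$ and $\frac1n\sum_{a\ge0}\Delta(n/2^a)\to0$ — the latter by splitting the sum near $a\approx\tfrac12\log_2 n$: the part with argument $\ge\sqrt n$ is $\le\theta(\sqrt n)\to0$ by the dyadic comparison, the part with argument $<\sqrt n$ is $\le\Delta(\sqrt n)\,(\log_2 n)/n\to0$ since $\Delta(k)/k\to0$. This yields $\limsup_n h(n)/n\le h(m)/m+2\theta(m)$ for every $m$, and letting $m\to\infty$ (with $\theta(m)\to0$) gives $\limsup_n h(n)/n\le\liminf_n h(n)/n$, as wanted; in particular the common value is $\le h(1)+2\theta(1)<+\infty$.

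I expect the main obstacle to be exactly this uniform, scale-by-scale control of the error. A naive splitting of $n$ into $\lfloor n/m\rfloor$ equal blocks of size $m$ would contribute an error of order $\Delta(n)/m$, which does not vanish; it is the geometrically spaced (dyadic) decomposition, together with the comparison of $\Delta(2^am)/(2^am)$ with the convergent tail $\theta(m)$, that makes the error negligible. Everything else is bookkeeping. (Equivalently, one could repackage the argument by exhibiting a correction $c$ with $c(n)/n\to0$ for which $h+c$ is genuinely subadditive and then invoke Fekete's lemma; the estimates needed are the same.)
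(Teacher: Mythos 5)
The paper does not prove this lemma; it simply cites it from Hammersley's 1962 paper, so there is no "paper's own proof" to compare against. Judged on its own, your proof is correct and complete. The architecture is right: reduce to $\Delta\ge0$, set $\theta(m)=\sum_{r\ge m}\Delta(r)/(r(r+1))$, and show $\limsup_n h(n)/n\le h(m)/m+2\theta(m)$ for each $m$, then let $m\to\infty$. The two preliminary estimates are both correct ($\Delta(n)/n\le 2(\theta(n)-\theta(2n))$ by bounding the block $\sum_{r=n}^{2n-1}$ below, and the geometric sum $\sum_{a\ge1}\Delta(2^am)/(2^am)\le2\theta(m)$ by telescoping), and the dyadic decomposition of $\lfloor n/m\rfloor$ is the key move: you correctly observe $R_i<n/2^{i-1}$, so the merging errors are geometrically spaced in scale, and the split of $\frac1n\sum_{a\ge0}\Delta(\lfloor n/2^a\rfloor)$ at scale $\sqrt n$ does make both halves vanish (the coarse part via telescoping against $\theta(\sqrt n)$, the fine part via $\Delta(k)/k\to0$ times $O(\log n)$ terms). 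The remaining details ($h(s)/n\to0$, the $s=0$ edge case, and the constant $2$ absorbing the extra $\Delta(n)$ term in $E_n$) are cosmetic. You also correctly diagnose why the naive $n=qm+r$ decomposition fails here. This is a clean, self-contained argument in the de Bruijn--Erd\H{o}s/Hammersley spirit; the alternative "correction term" route you mention in passing would need a slightly more delicate choice than $c(n)=2n\theta(n)$ to make $h+c$ genuinely subadditive, so the dyadic version you actually carried out is the safer one.
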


\paragraph{Main results} 

We now state our main results. Theorem \ref{critical_sum} states that if the correlations decay fast enough then the annealed critical behaviour is the same as in the homogeneous case (see Theorem \ref{hom_crit_exp}). Under the stronger assumption that the correlations decay exponentially fast, a sharper version of the critical behaviour can be derived, as stated in Theorem \ref{critical_exp}.

\begin{theo}\label{critical_sum}
 If $\sum_{n\geq1} n|\rho_n| < \infty$ then for all $\beta>0$, in all of the following cases there exist a constant $c_{\beta}\in(0,1)$ and a slowly varying function $\tilde{L}$ such that
\begin{enumerate}
 \item if $K(n) = L(n) n^{-(1+\alpha)}$, $0<\alpha < 1$, for $\delta$ positive and small enough,
 \begin{equation*}
  c_{\beta} \tilde{L}(1/\delta)\delta^{1/\alpha} \leq F^a(\beta,h_c^a(\beta)+\delta) \leq (1/c_{\beta}) \tilde{L}(1/\delta)\delta^{1/\alpha},
 \end{equation*}
\item if $K(n) = L(n)/n^2$ with $\sum_{n\geq1}L(n)/n = +\infty$, for $\delta$ positive and small enough,
 \begin{equation*}
  c_{\beta} \tilde{L}(1/\delta)\delta \leq F^a(\beta,h_c^a(\beta)+\delta) \leq (1/c_{\beta}) \tilde{L}(1/\delta)\delta,
 \end{equation*}
 where $L(1/\delta) \stackrel{\delta\rightarrow 0}{\rightarrow} 0$,
 \item if $\sum_{n\geq1} n K(n) < +\infty$, then for all $\delta>0$
 \begin{equation}\label{critical_sum_moy}
    c_{\beta}\delta \leq F^a(\beta,h_c^a(\beta)+\delta) \leq \delta.
 \end{equation}
\end{enumerate}
\end{theo}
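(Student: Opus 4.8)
The plan is to express the annealed free energy through the Gurevich pressure of a Ruelle--Perron--Frobenius operator on the full shift over $\mathbb{N}^*$, and then to compare that pressure with the one that governs the homogeneous model, so that Theorem \ref{hom_crit_exp} can be imported directly instead of redoing any Tauberian analysis.

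\emph{Step 1: the transfer operator and the identification of $F^a$.} I would first expand $Z^a_{n,\beta,h}$ over the renewal points $0=t_0<t_1<\dots<t_{\imath_n}=n$ and rewrite $\sum_{1\le k<l\le n}\rho_{l-k}\delta_k\delta_l=\sum_j\sum_{i<j}\rho_{t_j-t_i}$; reading the gap sequence most-recent-first as a point $\underline u=(u_1,u_2,\dots)\in\Sigma:=(\mathbb{N}^*)^{\mathbb{N}}$, the energy picked up when the $j$-th renewal is added equals $\beta^2\phi$ evaluated at the corresponding point, up to a boundary error, where
\[
 \phi(\underline u):=\sum_{m\ge1}\rho_{u_1+\dots+u_m}.
\]
The hypothesis $\sum_n n|\rho_n|<\infty$ is precisely what makes $\phi$ bounded ($|\phi|\le\sum_r|\rho_r|$) and of summable variations ($\mathrm{var}_n\phi\le 2\sum_{r>n}|\rho_r|$, so $\sum_n\mathrm{var}_n\phi\le2\sum_r r|\rho_r|<\infty$), and the same estimate bounds the boundary error (the absent interaction with $t_0=0$ together with the contribution of the fictitious past) by a constant uniformly in $n$. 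Thus the potential $g_s:=u_1\log s+\log K(u_1)+\beta^2\phi+h+\beta^2/2$ ($s\in(0,1]$) falls in the class treated in Section~3; let $\mathcal{L}_{g_s}$ be the associated transfer operator. A first-renewal decomposition would write $\sum_n Z^a_{n,\beta,h}s^n$ as a Neumann series for $\mathcal{L}_{g_s}$, so its radius of convergence $e^{-F^a(\beta,h)}$ is the largest $s\le1$ with $P_G(g_s)\le0$. Unfolding this with $\psi:=\log K(u_1)+\beta^2\phi$, using $P_G(g_s)=P_G((\log s)u_1+\psi)+h+\beta^2/2$ and $P_G(\log K(u_1))=\log\sum_a K(a)=0$, should give
\[
 h_c^a(\beta)=-\tfrac{\beta^2}{2}-P_G(\psi)\in\big[-\tfrac{\beta^2}{2}-\beta^2\|\rho\|_1,\,-\tfrac{\beta^2}{2}+\beta^2\|\rho\|_1\big],
\]
and, for $\delta>0$, the identity $P_G(\psi)-P_G(\psi-Fu_1)=\delta$ with $F=F^a(\beta,h_c^a(\beta)+\delta)>0$.

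\emph{Step 2: comparison with the homogeneous pressure.} Since $t\mapsto P_G(\psi-tu_1)$ is convex and strictly decreasing with one-sided derivatives $-\int u_1\,d\mu_{\psi-tu_1}$, one has $P_G(\psi)-P_G(\psi-Fu_1)=\int_0^F m(t)\,dt$ with $m(t):=\int u_1\,d\mu_{\psi-tu_1}$, and for $\beta=0$ the same computation reads $-\log\hat K(e^{-F})=\int_0^F m_0(t)\,dt$, where $\hat K(s)=\sum_n K(n)s^n$ and $m_0(t)=\sum_n nK(n)e^{-tn}/\hat K(e^{-t})$ is the mean gap under the exponentially tilted renewal law $\nu_t$. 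The crux is the uniform comparison $m(t)\asymp m_0(t)$ as $t\downarrow0$: because $\psi-tu_1=(\log K(u_1)-tu_1)+\beta^2\phi$ is a bounded perturbation of summable variations of a potential depending only on the first coordinate, the Ruelle--Perron--Frobenius theorem on countable Markov shifts recalled in Section~3 yields an eigenfunction and a conformal measure whose Radon--Nikodym derivatives against those of $\nu_t$ are bounded above and below by constants depending only on $\|\phi\|_\infty$ and $\sum_n\mathrm{var}_n\phi$ --- uniformly in $t$, since the unperturbed potential has zero variations. Hence $\mu_{\psi-tu_1}\asymp\nu_t$ uniformly, so $m(t)\asymp m_0(t)$, and after integration
\[
 P_G(\psi)-P_G(\psi-Fu_1)\ \asymp\ -\log\hat K(e^{-F})\ \sim\ 1-\hat K(e^{-F})\qquad(F\downarrow0),
\]
with multiplicative constants depending only on $\beta$ and $\sum_r|\rho_r|$.

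\emph{Step 3: conclusion, and the obstacle.} The homogeneous free energy solves $\hat K(e^{-F(0,\delta)})=e^{-\delta}$, i.e.\ $1-\hat K(e^{-F(0,\delta)})\sim\delta$, and $F\mapsto1-\hat K(e^{-F})$ is regularly varying at $0$ with positive index in each of the three cases; combining this with Step~2 and $P_G(\psi)-P_G(\psi-Fu_1)=\delta$ gives $1-\hat K(e^{-F})\asymp\delta\asymp1-\hat K(e^{-F(0,\delta)})$, hence $F^a(\beta,h_c^a(\beta)+\delta)\asymp F(0,\delta)$ as $\delta\downarrow0$ with $\beta$-dependent constants. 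Theorem \ref{hom_crit_exp} then turns this into the two-sided bounds of cases~1 and~2 (with $\tilde L$ the homogeneous slowly varying function, $\tilde L(1/\delta)\to0$ in case~2 since $m=\infty$) and into the lower bound $F^a(\beta,h_c^a(\beta)+\delta)\ge c_\beta\delta$ of case~3; the matching upper bound $F^a(\beta,h_c^a(\beta)+\delta)\le\delta$ for all $\delta>0$ needs nothing from Section~3, as $F^a(\beta,\cdot)$ is convex, vanishes at $h_c^a(\beta)$, and has right derivative $\le1$, being the increasing limit of the quantities $n^{-1}\partial_h\log Z^a_{n,\beta,h}\in[0,1]$. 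The main obstacle is the uniform estimate $m(t)\asymp m_0(t)$ of Step~2: at $t=F^a$ with $\delta\to0$ one probes the transfer operator exactly at the edge of the region where it is quasi-compact (the pinning transition is the blow-up of its resolvent), so one must ensure that the Ruelle--Perron--Frobenius constants do not degenerate as the tilt $t\downarrow0$; closely related, and also where the machinery of Section~3 is really used, are the rigorous passage from $\sum_n Z^a_{n,\beta,h}s^n$ to the resolvent of $\mathcal{L}_{g_s}$ on an appropriate function space and the verification that the boundary term (no interaction with $t_0=0$, and the switch between constrained and free partition functions, cf.\ Remark \ref{pinned_free}) affects only multiplicative constants.
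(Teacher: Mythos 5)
Your argument reaches the same quantitative conclusion as the paper — namely that $-P_G(\phi_{\beta,F})$ is comparable, uniformly as $F\downarrow 0$, to $1-\hat K(e^{-F})$ where $\hat K(s)=\sum_n K(n)s^n$, after which regular-variation inversion gives the theorem — but via a genuinely different route. You differentiate the convex map $t\mapsto P_G(\psi - t u_1)$ and compare the equilibrium measure $\mu_{\psi - tu_1}$ with the tilted renewal law $\nu_t$; the paper instead iterates $\mathcal{L}_{\beta,F}$ on the test function $h_\beta/(h_\beta\circ T)$ and reads off the pressure bound from the alternative characterization (\ref{alter_def2}), using only the boundedness of $G$ and item (5) of Theorem \ref{Ruelle_op_spec}, so it needs neither differentiability of the pressure nor any comparison of equilibrium measures. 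Two points in your Step 2 deserve tightening. First, the identity $\partial_t P_G(\psi-tu_1)=-\int u_1\,d\mu_{\psi-tu_1}$ is not automatic for potentials with merely summable variations; under BIP it follows from convexity together with uniqueness of the equilibrium state, and that is the argument you should invoke before integrating the a.e.\ derivative of the (absolutely continuous) convex function. Second, your claim that the Radon--Nikodym derivative $d\mu_{\psi-tu_1}/d\nu_t$ is bounded above and below is false in general — on length-$n$ cylinders the two measures can differ by factors exponential in $n$ — but what you actually need is only the one-cylinder comparison $\mu_{\psi-tu_1}([n])\asymp \nu_t([n])$ uniformly in $n$ and $t$, which does follow from the Gibbs property with constant controlled by $\beta^2\sum_m V_m(G)$ (hence independent of $t$) together with $|P_G(\psi-tu_1)-\log\hat K(e^{-t})|\le\beta^2\|G\|_\infty$; this suffices for $m(t)\asymp m_0(t)$. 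Finally, your convexity argument for the upper bound $F^a\le\delta$ in item (3) is a clean alternative to the paper's pointwise Hamiltonian shift, while the paper's lower bound in that case is obtained via free partition functions, Jensen's inequality, and Birkhoff's theorem applied to the $m_\beta$-ergodic gap process, which also produces the explicit constant $\left(\int\pi_0\,dm_\beta\right)^{-1}$.
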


\begin{theo}\label{critical_exp}
 Under the following assumption:
 \begin{equation*}
\exists C>0,\,\varrho\in(0,1) : \forall n\geq 1\quad  |\rho_n| \leq C \varrho^n,
 \end{equation*}
for all $\beta>0$ and in all of the following cases there exist a constant $c_{\beta}>0$ and a slowly varying function $\tilde{L}$ such that
\begin{enumerate}
 \item if $K(n) = L(n) n^{-(1+\alpha)}$, $0<\alpha < 1$, 
 \begin{equation*}
  F^a(\beta,h_c^a(\beta)+\delta) \stackrel{\delta\searrow 0}{\sim} c_{\beta} \tilde{L}(1/\delta)\delta^{1/\alpha},
 \end{equation*}
\item if $K(n) = L(n)/n^2$ with $\sum_{n\geq1}L(n)/n = +\infty$,
 \begin{equation*}
  F^a(\beta,h_c^a(\beta)+\delta) \stackrel{\delta\searrow 0}{\sim} c_{\beta} \tilde{L}(1/\delta)\delta,
 \end{equation*}
 where $\tilde{L}(1/\delta) \stackrel{\delta\rightarrow 0}{\rightarrow} 0$,
 \item if $\sum_{n\geq1} n K(n) < +\infty$, 
 \begin{equation*}
    F^a(\beta,h_c^a(\beta)+\delta) \stackrel{\delta\searrow 0}{\sim} c_{\beta} \delta.
 \end{equation*}
\end{enumerate}
\end{theo}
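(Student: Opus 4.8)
\emph{Overall strategy.} The plan is to realise $F^a(\beta,h)$ as the inverse of a Gurevich-pressure function $g$ built from a Ruelle--Perron--Frobenius operator, and to read off the critical behaviour from the behaviour of $g$ at the edge of its domain --- exactly as Theorem~\ref{hom_crit_exp} is read off the equation $\sum_n e^{-Fn}K(n)=e^{-h}$ in the homogeneous case. First I would encode renewal configurations by the sequence of their gaps. Expanding (\ref{ann_part_fct}) over the renewal, a path visiting $0=\tau_0<\tau_1<\dots<\tau_m=n$ carries the weight $\prod_{i=1}^m K(t_i)\,e^{(h+\beta^2/2)m}\exp(\beta^2\sum_{1\le i<j\le m}\rho_{t_{i+1}+\dots+t_j})$ with $t_i=\tau_i-\tau_{i-1}$. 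Reading the $t_i$ as a point of the full shift $\Sigma=(\mathbb N^*)^{\mathbb N}$ and setting, for $x=(x_i)_{i\ge1}$,
\[
\psi_{\beta,F}(x):=\log K(x_1)-Fx_1+\tfrac{\beta^2}{2}+\beta^2\sum_{j\ge2}\rho_{x_2+\dots+x_j},
\]
a telescoping identity identifies $\mathcal L_{h+\psi_{\beta,F}}^m\mathbf 1$ evaluated at any fixed reference point with the $m$-th term of $\sum_n e^{-Fn}Z^a_{n,\beta,h}$, up to a multiplicative error $e^{\pm O_\beta(1)}$ coming from interactions with the (finitely many effective, since $|\rho_n|$ decays) coordinates beyond $n$; here $\mathcal L_\varphi u(y)=\sum_{a\in\mathbb N^*}e^{\varphi(ay)}u(ay)$ is the transfer operator. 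Since $h$ enters $\psi_{\beta,F}$ as an additive constant, $\mathcal L_{h+\psi_{\beta,F}}=e^h\mathcal L_{\psi_{\beta,F}}$, and $F^a(\beta,h)$ is the smallest $F\ge0$ for which this operator has spectral radius $\le1$.

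\emph{Pressure function and reduction.} Under the exponential-decay hypothesis one checks $\operatorname{var}_k\psi_{\beta,F}\le C\varrho^{\,k}$ for $k\ge2$, so $\psi_{\beta,F}$ is locally H\"older; the full shift has the big-images-and-preimages property, $\sum_a\exp(\sup_{[a]}\psi_{\beta,F})\asymp\sum_a K(a)<\infty$, and the Gurevich pressure $g(F):=P_G(\psi_{\beta,F})$ is finite. The Ruelle--Perron--Frobenius theory for countable Markov shifts recalled in Section~3 then yields a positive locally H\"older eigenfunction $h_F$, a conformal measure $\nu_F$, the leading eigenvalue $\lambda_F=e^{g(F)}$ and a spectral gap for $\lambda_F^{-1}\mathcal L_{\psi_{\beta,F}}$ on the associated Banach space $\mathcal B_\theta$ ($\theta\in(\varrho,1)$). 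Since $aK(a)e^{-Fa}$ is summable for $F>0$, the family $F\mapsto\mathcal L_{\psi_{\beta,F}}$ is real-analytic on $(0,\infty)$ and norm-continuous up to $F=0$ (dominated convergence, using $\sum_a K(a)<\infty$); hence $g$ is real-analytic on $(0,\infty)$, continuous on $[0,\infty)$, strictly decreasing (with $g'(F)=-\mathbb E_{\mu_F}[x_1]<0$, $\mu_F=h_F\nu_F$) and $g(F)\to-\infty$. Consequently $h^a_c(\beta)=-g(0)$ (consistent with (\ref{ann_cc_iid}) when $\rho\equiv0$) and, for $h>h^a_c(\beta)$, $F^a(\beta,h)=g^{-1}(-h)$. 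Writing $\delta=h-h^a_c(\beta)$, the problem reduces to the behaviour of $g(0)-g(F)$ as $F\searrow0$, which is then inverted via the standard inversion of regularly varying functions \cite{MR1015093}.

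\emph{Asymptotics of the pressure gap (the crux).} From the identity $(\lambda_0-\lambda_F)\,\nu_0(h_F)=\nu_0\big((\mathcal L_{\psi_{\beta,0}}-\mathcal L_{\psi_{\beta,F}})h_F\big)$ together with the spectral gap (which gives $\|h_F-h_0\|_{\mathcal B_\theta}=O(\|\mathcal L_{\psi_{\beta,0}}-\mathcal L_{\psi_{\beta,F}}\|)$ and $\nu_0(h_F)\to1$) one obtains
\[
\lambda_0-\lambda_F=\sum_{a\ge1}K(a)\,(1-e^{-Fa})\,\Psi(a)+o\Big(\sum_{a\ge1}K(a)(1-e^{-Fa})\Big),\quad \Psi(a):=\int e^{\beta^2/2+\beta^2\sum_{s\ge1}\rho_{y_1+\dots+y_s}}h_0(ay)\,d\nu_0(y).
\]
The exponential decay of $\rho$ forces the eigenfunction to stabilise in its first coordinate: a $\sup_n$-uniform bound on the ratio $\mathcal L_{\psi_{\beta,0}}^n\mathbf 1(ay)/\mathcal L_{\psi_{\beta,0}}^n\mathbf 1(a'y)$ (the first coordinate affects each Birkhoff summand only through $\rho$ at arguments $\ge a$) shows $h_0(ay)\to h_0^\infty(y)$ as $a\to\infty$, with $h_0$ bounded above and below, so $\Psi(a)\to\Psi_\infty\in(0,\infty)$. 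Since $\sum_a K(a)(1-e^{-Fa})$ concentrates on $a\sim1/F$ as $F\searrow0$, a Karamata/Tauberian computation gives it $\sim\Gamma(1-\alpha)\alpha^{-1}L(1/F)F^{\alpha}$ in case~1, $\sim F\sum_{a\le1/F}aK(a)$ (slowly varying, $\to\infty$) in case~2, and $\sim mF$ in case~3; hence $g(0)-g(F)\sim\lambda_0^{-1}\Psi_\infty\times(\text{that quantity})$. Inverting gives $F^a(\beta,h^a_c(\beta)+\delta)\sim c_\beta\tilde L(1/\delta)\delta^{1/\alpha}$, resp. $\sim c_\beta\tilde L(1/\delta)\delta$ with $\tilde L\to0$, resp. $\sim c_\beta\delta$, with $c_\beta\in(0,\infty)$ and $\tilde L$ the slowly varying function of Theorem~\ref{hom_crit_exp}. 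Theorem~\ref{critical_sum} follows from the same scheme: under $\sum_n n|\rho_n|<\infty$ one still has a locally bounded-variation potential with the RPF eigendata, enough to sandwich $g(0)-g(F)$ between two constant multiples of $\sum_aK(a)(1-e^{-Fa})$, which after inversion gives the stated two-sided bounds.

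\emph{Main obstacle.} The delicate part is the last step: producing a genuine spectral gap for the whole family $\mathcal L_{\psi_{\beta,F}}$ on one Banach space adapted to the decay rate $\varrho$, controlling the eigendata as $F\searrow0$ --- in particular the non-obvious stabilisation $h_0(ay)\to h_0^\infty(y)$, which is precisely where exponential (rather than merely summable) decay of $\rho$ is needed --- and upgrading the first-order perturbation expansion of $\lambda_F$ to an honest equivalence instead of a two-sided bound. The encoding and reduction of the first two steps, and the concluding regular-variation inversion, are by comparison routine (Proposition~\ref{existence} and Hammersley's lemma already guarantee the limit defining $F^a$ exists, so only its value is at stake).
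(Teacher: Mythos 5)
Your overall framework mirrors the paper's — encode by the gap sequence, pass to a potential on the full shift $\Sigma=(\mathbb N^*)^{\mathbb N}$, characterize $F^a(\beta,h)$ via the Gurevich-pressure equation $P_G(\phi_{\beta,F})=-\delta$, and invert by regular variation — and you correctly identify the eigenfunction-stabilization step $h_0(ay)\to h_0^\infty(y)$, which is exactly the paper's proof that $h_\beta(n\overline p)\to\overline h_\beta$ via item (6) of Theorem \ref{Ruelle_op_spec} and the ratio bounds from decaying variations. Where you genuinely diverge is the crux, the asymptotics of the pressure gap as $F\searrow 0$: you propose a direct spectral-perturbation argument (the identity $(\lambda_0-\lambda_F)\nu_0(h_F)=\nu_0((\mathcal L_0-\mathcal L_F)h_F)$, together with a spectral gap giving $\|h_F-h_0\|=O(\|\mathcal L_0-\mathcal L_F\|)$), whereas the paper invokes Sarig's phase-transition theorem for countable Markov shifts (Theorem \ref{Gurevich_pressure_critical_exp}, from \cite{MR2249785}) as a black box and reduces the whole computation to showing $m_\beta([n])\sim c_\beta K(n)$.

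The obstacle you flag at the end is not a loose end but the actual substance of the theorem being replaced. Two concrete points. First, the results recalled in Section \ref{CMS} (Theorem \ref{Ruelle_op_spec}) give RPF eigendata and uniform-on-compacts convergence of the normalized iterates, but \emph{not} quasi-compactness or a spectral gap on a fixed Banach space; the bound $\|h_F-h_0\|_{\mathcal B_\theta}=O(\|\mathcal L_{\psi_{\beta,0}}-\mathcal L_{\psi_{\beta,F}}\|)$, and the claim that the family is norm-continuous up to $F=0$ on that space, are additional nontrivial inputs you would have to import and justify (one has to choose a norm for which both the heavy-tailed operator at $F=0$ and the perturbed operators at $F>0$ act, are quasi-compact, and have a gap uniformly in $F$ near $0$). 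Second, in cases 1 and 2, $\sum_n nK(n)=\infty$ so $\pi_0\notin L^1(m_\beta)$ and $F\mapsto P_G(\phi_{\beta,F})$ is not differentiable at $F=0$: the leading order $\vartheta(F)=\sum_aK(a)(1-e^{-Fa})\asymp L(1/F)F^\alpha$ is itself non-analytic. Your Karamata manipulations of $\vartheta(F)$ and the replacement $\Psi(a)\leadsto\Psi_\infty$ are fine in principle, but discarding the remainder $\nu_0\bigl((\mathcal L_0-\mathcal L_F)(h_F-h_0)\bigr)$ to turn a two-sided bound (which is already Theorem \ref{critical_sum}, and which the paper proves separately by applying $\mathcal L_{\beta,F}$ to $h_\beta/(h_\beta\circ T)$) into an honest equivalence $\sim$ is precisely the content that Sarig's phase-transition theorem supplies. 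In short: your route is a legitimate alternative in spirit, but as written it silently re-derives a substantial theorem rather than proving Theorem \ref{critical_exp}, and the step where it would do so is exactly the step you declare as ``the main obstacle'' without resolving it.
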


\begin{rmk}
 It appears in the proofs that the slowly varying functions $\tilde{L}$ of Theorems \ref{critical_sum} and \ref{critical_exp} are the same as those of Theorem \ref{hom_crit_exp}.
\end{rmk}

Along the proofs of Theorems \ref{critical_sum} and \ref{critical_exp}, we derive an expression of the annealed critical curve in function of the largest eigenvalue of some operator that we define later (see Corollary \ref{cor}). Even if it is not possible to compute it in general, one can give the following large-temperature asymptotics (compare with Equation (\ref{ann_cc_iid})):

\begin{theo}\label{asympt}
Under the assumption of Proposition \ref{existence} we have
 \begin{equation*}
  h_c^a(\beta) \stackrel{\beta\searrow 0}{\sim} -\frac{\beta^2}{2}\left( 1 + 2 \sum_{n\geq 1} \rho_n P(n\in\tau) \right).
 \end{equation*}
\end{theo}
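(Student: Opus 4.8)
The goal is to show that $h_c^a(\beta)/\beta^2\to-\tfrac12(1+2\sum_{n\ge1}\rho_nP(n\in\tau))$ as $\beta\searrow0$. Write $\gamma:=\beta^2$, $\mu:=h+\gamma/2$ and $Q_n:=\sum_{1\le k<l\le n}\rho_{l-k}\delta_k\delta_l$, so that by (\ref{ann_part_fct}) $Z^a_{n,\beta,h}=E[\exp(\mu\imath_n+\gamma Q_n)\delta_n]$. Restricting to the single-jump path gives $Z^a_{n,\beta,h}\ge K(n)e^{\mu}$, hence by Proposition \ref{existence} $F^a(\beta,h)\ge0$, with $F^a(\beta,h)=0$ exactly when $Z^a_{n,\beta,h}$ grows subexponentially; since $\mu\mapsto Z^a_{n,\beta,h}$ is increasing, $h_c^a(\beta)$ is the threshold value of $h$ at which this growth begins. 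The plan is to bracket the model between two finite-range approximations, whose annealed critical curves are given by finite Perron--Frobenius eigenvalues and hence admit an explicit first-order expansion in $\gamma$, and then to send the range to infinity.

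Fix $N\ge1$, set $Q_n^{(N)}:=\sum_{1\le k<l\le n,\ l-k\le N}\rho_{l-k}\delta_k\delta_l$ and $R_N:=\sum_{m>N}|\rho_m|$. Since $|Q_n-Q_n^{(N)}|\le R_N\imath_n$, for $\gamma>0$
\begin{equation*}
Z_n^{(N)}(\mu-\gamma R_N)\ \le\ Z^a_{n,\beta,h}\ \le\ Z_n^{(N)}(\mu+\gamma R_N),\qquad Z_n^{(N)}(\nu):=E[e^{\nu\imath_n+\gamma Q_n^{(N)}}\delta_n].
\end{equation*}
The quantity $Z_n^{(N)}(\nu)$ is the partition function of a renewal process with memory of range $N$; as in the finite-range analysis of \cite{0903.3704v3} it can be recast as the partition function of a Markov renewal process with an explicit \emph{finite} nonnegative transition kernel $M_\gamma$, indexed by the contact pattern in a window of length $N$, whose entries are values of $K$ weighted by factors $e^{\gamma\rho_j}$. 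Its free energy vanishes iff $e^{\nu}\Lambda_N(\gamma)\le1$, where $\Lambda_N(\gamma)$ is the Perron--Frobenius eigenvalue of $M_\gamma$, so that $\mu_c^{(N)}(\gamma):=-\log\Lambda_N(\gamma)$ is its annealed critical parameter. Combined with the sandwich this yields
\begin{equation*}
\mu_c^{(N)}(\gamma)-\gamma R_N-\tfrac{\gamma}{2}\ \le\ h_c^a(\beta)\ \le\ \mu_c^{(N)}(\gamma)+\gamma R_N-\tfrac{\gamma}{2}.
\end{equation*}

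It remains to expand $\mu_c^{(N)}(\gamma)$ at $\gamma=0$. There the memory is switched off: $M_0$ has all row sums equal to $\sum_iK(i)=1$ (recurrence of $\tau$), so $\Lambda_N(0)=1$, with right eigenvector $\mathbf 1$ and left eigenvector the law $\ell_0$ of the window contact pattern under the stationary renewal. By analyticity of $\gamma\mapsto M_\gamma$ and first-order Perron--Frobenius perturbation theory, $\Lambda_N'(0)=\langle\ell_0,(\partial_\gamma M_\gamma|_{\gamma=0})\mathbf 1\rangle$; here $(\partial_\gamma M_\gamma|_{\gamma=0})\mathbf 1$ records, pattern by pattern, the expected $\rho$-weight of the pairs created by the arrival of one new renewal, and averaging over $\ell_0$ together with the first-renewal decomposition $P(m\in\tau)=K(m)+\sum_{j=1}^{m-1}K(j)P(m-j\in\tau)$ collapses it to $\Lambda_N'(0)=\sum_{j=1}^N\rho_jP(j\in\tau)=:\bar\rho^{(N)}$. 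Hence $\mu_c^{(N)}(\gamma)=-\gamma\bar\rho^{(N)}+o(\gamma)$. Inserting this into the last display, dividing by $\gamma$, letting $\gamma\searrow0$ for fixed $N$ and then $N\to\infty$ (so $R_N\to0$ and $\bar\rho^{(N)}\to\sum_{n\ge1}\rho_nP(n\in\tau)$) squeezes $h_c^a(\beta)/\gamma$ between two quantities both converging to $-(\tfrac12+\sum_{n\ge1}\rho_nP(n\in\tau))$, which is the claim.

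The main obstacle is the middle step, and specifically the identification of the linear coefficient of $\mu_c^{(N)}$ as the \emph{renewal-weighted} sum $\sum_{j=1}^N\rho_jP(j\in\tau)$ rather than the plain sum $\sum_{j=1}^N\rho_j$: this is precisely where the memory of the renewal, transferred onto the disorder by annealing, enters, and it demands care both in writing down the Markov renewal representation and in identifying $\ell_0$ with the stationary renewal pattern. A minor point is the interchange of the limits $\gamma\searrow0$ and $N\to\infty$, handled by taking $\gamma\searrow0$ first so that the $N$-dependent $o(\gamma)$-error is harmless. Alternatively, once Corollary \ref{cor} is in hand one may apply first-order perturbation theory directly to the transfer operator of the full model, the coefficient computation being the same; the truncation route is preferred here since it only requires $\sum_n|\rho_n|<\infty$, under which that operator need not possess a spectral gap.
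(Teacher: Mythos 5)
Your argument reproduces the paper's own proof of Theorem \ref{asympt}: truncate the correlations at range $N$ (here via $Q_n^{(N)}$, in the paper via the equivalent replacement $\rho_n\mapsto\rho_n\mathbf{1}_{\{n\le N\}}$), sandwich $h_c^a(\beta)$ between the finite-range critical curves $h_c^{a,N}(\beta)\pm\beta^2\sum_{m>N}|\rho_m|$, and squeeze by letting $N\to\infty$ after $\beta\searrow0$. The only difference is that the paper simply invokes \cite[Proposition~4.2]{0903.3704v3} for the finite-range asymptotics $h_c^{a,N}(\beta)\sim-\tfrac{\beta^2}{2}\bigl(1+2\sum_{j=1}^N\rho_jP(j\in\tau)\bigr)$, whereas you sketch a correct re-derivation of that lemma via first-order Perron--Frobenius perturbation of the finite transfer matrix.
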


\begin{rmk}
The results of Theorem \ref{critical_sum} were also obtained in \cite[Theorem 5.2.2]{Berger_these} with a different method which does not rely on any knowledge on the annealed critical curve. There it is also proved that the smoothing inequality (\ref{smoothing}) still holds (with a different constant) if $\sum_{n\geq1}|\rho_n|$ is finite, which, combined to the results of Theorem \ref{critical_sum}, means that disorder is relevant if $\alpha>1/2$ and $\sum n|\rho_n|<+\infty$. Reference \cite[Chapter 5]{Berger_these} also contains a discussion on the Weinrib-Halperin criterion applied to the random pinning model with disorder correlations. The Weinrib-Halperin criterion is a generalization of the Harris criterion to correlated disorder (see \cite{MR729982} and \cite{PhysRevB.27.413} on this topic).
\end{rmk}

\paragraph{Outline of the proof of Theorem \ref{asympt}.} The large-temperature asymptotic of the annealed critical curve is not obtained from the characterization of Corollary \ref{cor} but by a truncation argument: we apply Proposition 5.2 of  \cite{0903.3704v3} (which is Theorem \ref{asympt} for finite-range correlations) to a truncated version of the correlation sequence $(\rho_n)_{n\geq0}$ and then make the range of the truncation go to infinity.

\paragraph{Outline of the proofs of Theorems \ref{critical_sum} and \ref{critical_exp}.} A first part is common to both proofs. In the finite-range correlations case of \cite{poisat_preprint_relevance} and \cite{0903.3704v3}, the annealed model has been tackled using a transfer matrix approach and Perron-Frobenius tools. The method also involves a Markov renewal process, which can be seen as a renewal process where one interarrival time depends on the $q$ previous ones, where $q$ is the range of correlations. If one wants to extend these methods to the infinite range case, it is natural to expect the following analogy:\\
\\
\begin{tabular}{|p{5cm}||p{6cm}|}\hline
\multicolumn{1}{|c||}{\textbf{Finite range}} & \multicolumn{1}{c|}{\textbf{Infinite range}} \\
\hline
Transfer matrix on $\mathbb{N^*}^q$ & Transfer operator on functions on $\mathbb{N^*}^{\mathbb{N}}$\\
\hline
Process with finite memory & Process with infinite memory\\
\hline
Perron-Frobenius eigenvalue and eigenvectors & Perron-Frobenius eigenvalue, eigenfunctions , eigenmeasures ?\\
\hline
\end{tabular}\\
\\
Of course, there is no infinite sequence of interarrival times in finite size systems, so we introduced a modified version of the finite-volume annealed partition function by adding a ``past'', i.e an infinite sequence of interarrival times before $\tau_0 = 0$. Fast decay of correlations imply that this operation does not affect the value of the infinite volume annealed free energy. The modified annealed partition function can then be written as iterations of a transfer operator (or Ruelle-Perron-Frobenius operators) acting on functions defined on the space of sequences of integers. This step makes a link between our model and a topic in ergodic theory and dynamical systems called thermodynamic formalism for countable Markov shifts (see Section \ref{CMS}). Some of the main interests there are the existence and properties of Gibbs and equilibrium measures for sequences taking values in a countable alphabet, equipped with the left-shift action, and in the presence of a potential. In this context, a generalized Perron-Frobenius theorem (see Theorem \ref{Ruelle_op_spec} and \cite{MR1955261}) was proved for transfer operators under some regularity of the potential function. It also appears that this regularity assumption translates in our case as a condition on the decay of correlations, namely $\sum n|\rho_n|  < +\infty$. With ``Perron-Frobenius'' eigenfunctions in hands, one can first give a characterization of the annealed critical curve and free energy, and then prove Theorem \ref{critical_sum}. Going to the sharper results of Theorem \ref{critical_exp} requires a non-trivial result on phase transitions for countable Markov shifts (see Theorem \ref{Gurevich_pressure_critical_exp} and \cite{MR2249785}), the potential regularity assumption of which is equivalent to exponential decay of correlations in our case.

\section{Countable Markov shifts}\label{CMS}

In a first part we define some standard objects of the thermodynamic forma\-lism of countable Markov shifts, among which the Ruelle-Perron-Frobenius (or transfer) operators and Gurevich pressure (an analogue of the Perron-Frobenius eigenvalue of finite irreducible matrices). The second part contains two theorems that we use to prove Theorems \ref{critical_sum} and \ref{critical_exp}.

\subsection{Definitions}

Let $S$ be a countable set and $\Sigma = S^{\mathbb{N}}$ the set of $S$-valued sequences. Let $T$ be the left-shift on $\Sigma$, that is
\begin{equation*}
 T : \left\{ \begin{array}{ccc}
             \Sigma & \rightarrow & \Sigma\\
             x = (x_i)_{i\geq0} & \mapsto & Tx = (x_{i+1})_{i\geq0}.
            \end{array}
 \right.
\end{equation*}
The metric on $\Sigma$ is defined by $d(x,y) = 2^{-\inf\{k\geq0 : x_k \neq y_k\}}$, with the convention that $\inf \varnothing = +\infty$. We adopt the following notations for cylinder sets:
\begin{equation*}
\forall n\geq 1,\, \forall a=(a_0,\ldots,a_{n-1})\in S^n,\, [a] := \{x\in \Sigma : x_0 = a_0, \ldots, x_{n-1}=a_{n-1}\}.
\end{equation*}
If $x$ is in $\Sigma$ and $s$ in $S$, we denote by $(sx)$, or $sx$ when there is no confusion, the concatenation of $s$ and $x$, that is $(sx)_0 = s$ and $(sx)_n = x_{n-1}$ for all $n\geq 1$. Notice that in the general case, $T$ may be defined on any shift-invariant subset of $\Sigma$, but here we are only interested in the full shift. In this case, $T$ is clearly topologically mixing. Indeed, for all finite cylinder set $C$, there exists $n\geq1$ (choose $n$ equal to the size of the cylinder) such that $T^n(C) = \Sigma.$

\begin{rmk}\label{bip}
In \cite{MR1955261}, the BIP condition -- for big images and preimages condition -- is introduced. Let $\mathbb{A} = (A_{i,j})_{(i,j)\in S\times S}$ be a matrix taking values in $\{0,1\}$ and suppose we only consider the shift acting on the subset $$\Sigma_{\mathbb{A}} := \{x\in S^{\mathbb{N}} : \forall i\geq 0, A_{x_i,x_{i+1}} = 1\}.$$ Then the BIP condition is satisfied iff there exist an integer $N\geq1$ and $b_1,\ldots,b_N$ in $S$ such that for all $a$ in $S$, there exists $i,j$ such that $A_{b_i, a} A_{a, b_j} = 1$. In our case the state space is $\Sigma_{\mathbb{A}}=S^{\mathbb{N}}$, so all the components of $\mathbb{A}$ are equal to $1$ and the BIP condition is clearly satisfied.
\end{rmk}

\paragraph{Potentials.} A potential is any real valued function $\phi$ defined on $\Sigma$. Potential variations are defined by:
\begin{equation}\label{var_pot}
 \forall n\geq1,\quad V_n(\phi) := \sup\{|\phi(x) - \phi(y)| : x_0 = y_0, \ldots, x_{n-1} = y_{n-1}\}.
\end{equation}
A potential is said to have summable variations if 
\begin{equation*}
 \sum_{n\geq 1} V_n(\phi) < +\infty,
\end{equation*}
and is said to have exponentially fast decaying variations if there exist $\varrho\in(0,1)$ and a constant $C>0$ such that for all $n\geq1$,
\begin{equation*}
 V_n(\phi) \leq C \times \varrho^n.
\end{equation*}
In the last case the potential is said to be H\"older continuous, because for all $x$ and $y$ such that $x_0 = y_0$,
\begin{equation*}
 |\phi(x) - \phi(y)| \leq C \varrho^{\inf\{k\geq0 : x_k \neq y_k\}} = C d(x,y)^{\kappa}
\end{equation*}
with $\kappa := -\log \varrho / \log 2$. For all $n\geq 1$, we write
\begin{equation}\label{phi_n}
 \phi_n = \sum_{k=0}^{n-1} \phi \circ T^k,
\end{equation}
with $T^0 := \Id$.

\paragraph{Transfer operators.} The transfer operator (also known as Ruelle-Perron-Frobenius operator) associated to a potential $\phi$, denoted by $\mathcal{L}_{\phi}$, is defined by:
\begin{equation*}
\forall f\in \mathbb{R}^\Sigma,\, \forall x\in \Sigma, \quad (\mathcal{L}_{\phi}f)(x) = \sum_{y: Ty = x} e^{\phi(y)}f(y),
\end{equation*}
which also writes:
\begin{equation}\label{op_def}
 (\mathcal{L}_{\phi}f)(x) = \sum_{s\in S}e^{\phi(sx)}f(sx).
\end{equation}
For the definition to be correct, we need to specify on which domain the operator is well defined, that is when the series in Equation (\ref{op_def}) converges. If for instance $\|\mathcal{L}_{\phi}\mathbf{1} \|_{\infty}$ is finite (here $\mathbf{1}$ is the function identically equal to $1$), then the transfer operator acts on bounded functions. By iterating $\mathcal{L}_{\phi}$ we get
\begin{equation}\label{iter_operator_fct}
(\mathcal{L}_{\phi}^nf)(x) = \sum_{y : T^ny = x} e^{\phi_n(y)}f(y) = \sum_{s_1,\ldots, s_n \in S} e^{\phi_n(s_n\ldots s_1 x)}f(s_n\ldots s_1 x)                                                                                                                                                                                                                                                                    \end{equation}
where $\phi_n$ was defined in Equation (\ref{phi_n}).

\paragraph{Gurevich pressure.} The following proposition is taken from Theorem 1 in \cite{MR1738951}.
\begin{pr}\label{def_gurevich_pressure}
Let $a$ be an element of $S$ and $[a] = \{x \in\Sigma : x_0 = a\}$. Suppose that $\sum_{n\geq1}V_n(\phi)< +\infty$. Then the following limit
\begin{equation}\label{def_gurevich}
 P_G(\phi) := \lim_{n\rightarrow +\infty} \frac{1}{n} \log \sum_{x : T^nx = x} e^{\phi_n(x)}\mathbf{1}_{[a]}(x),
\end{equation}
called Gurevich pressure of $\phi$, exists and does not depend on $a$. Moreover, it is finite if $\|\mathcal{L}_{\phi}\mathbf{1} \|_{\infty}$ is finite.
\end{pr}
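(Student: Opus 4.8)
}
Write $\mathcal{V} := \sum_{n\geq1} V_n(\phi) < +\infty$ and, for $a\in S$ and $n\geq 1$, set $Z_n(a) := \sum_{x\,:\,T^nx=x} e^{\phi_n(x)}\mathbf{1}_{[a]}(x)$. Since $T$ acts on the full shift $\Sigma = S^{\mathbb{N}}$, a point $x$ with $T^nx=x$ and $x_0=a$ is precisely the periodic extension $\overline{a a_1\cdots a_{n-1}}$ of a word $(a,a_1,\dots,a_{n-1})\in S^n$, so $Z_n(a)=\sum_{(a_1,\dots,a_{n-1})\in S^{n-1}} e^{\phi_n(\overline{a a_1\cdots a_{n-1}})}\in(0,+\infty]$. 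The plan is: (i) prove a near-multiplicativity estimate for the $Z_n(a)$'s; (ii) deduce existence of the limit by superadditivity; (iii) deduce independence of $a$ from the same estimate; (iv) bound the limit by $\log\|\mathcal{L}_\phi\mathbf{1}\|_\infty$ to obtain finiteness.

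The crucial step is (i). I would show that for all $a,b\in S$ and $n,m\geq1$,
\[
Z_{n+m}(a)\ \geq\ e^{-2\mathcal{V}}\,Z_n(a)\,Z_m(b),
\]
by assigning to a pair of contributing periodic points $x=\overline{x_0\cdots x_{n-1}}$ (for $Z_n(a)$) and $y=\overline{y_0\cdots y_{m-1}}$ (for $Z_m(b)$) the periodic point $z:=\overline{x_0\cdots x_{n-1}y_0\cdots y_{m-1}}$, which satisfies $T^{n+m}z=z$, $z_0=a$, and depends injectively on $(x,y)$ for fixed $n,m$. The heart of the matter is the bound $|\phi_{n+m}(z)-\phi_n(x)-\phi_m(y)|\leq 2\mathcal{V}$, obtained by comparing Birkhoff sums coordinate by coordinate: for $0\leq k\leq n-1$ the shifted sequences $T^kz$ and $T^kx$ still agree on their first $n-k$ entries, so $|\phi(T^kz)-\phi(T^kx)|\leq V_{n-k}(\phi)$, and likewise $|\phi(T^{n+j}z)-\phi(T^jy)|\leq V_{m-j}(\phi)$ for $0\leq j\leq m-1$; summing over $k$ and $j$ gives $\sum_{i=1}^n V_i(\phi)+\sum_{i=1}^m V_i(\phi)\leq 2\mathcal{V}$. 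Since distinct pairs produce distinct $z$'s lying among the periodic points counted by $Z_{n+m}(a)$, the inequality follows. I expect this coordinate bookkeeping to be the main obstacle; the subtle point is that no quantity $V_0(\phi)$ (which is in general infinite) ever enters, because at the ``gluing'' positions the compared sequences still share their $0$-th coordinate — which is also why independence of $a$ must be argued by concatenation rather than by directly swapping the initial symbol of a periodic orbit.

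Granting (i): taking $a=b$ shows that $u_n:=\log Z_n(a)-2\mathcal{V}$ is superadditive, so by Fekete's lemma $\tfrac1n\log Z_n(a)$ converges to $\sup_{m\geq1}u_m/m\in(-\infty,+\infty]$; denote this limit $P(a)$. For independence, I would apply the displayed inequality with $(n,m)=(n,n)$, once with the pair $(a,b)$ and once with $(b,a)$, take $\tfrac1n\log$ and let $n\to\infty$, using $\tfrac1n\log Z_{2n}(a)\to 2P(a)$; this yields $2P(a)\geq P(a)+P(b)$ and, symmetrically, $2P(b)\geq P(b)+P(a)$, hence $P(a)=P(b)=:P_G(\phi)$ (both infinite simultaneously if one is). Finally, for finiteness, positivity and linearity of $\mathcal{L}_\phi$ give $\mathcal{L}_\phi^n\mathbf{1}\leq\|\mathcal{L}_\phi\mathbf{1}\|_\infty^n\,\mathbf{1}$ pointwise by induction on $n$; and comparing each periodic point $\overline{a a_1\cdots a_{n-1}}$ with the non-periodic point $(a,a_1,\dots,a_{n-1},a,a,\dots)$, exactly as in step (i), gives $\phi_n(\overline{a a_1\cdots a_{n-1}})\leq\phi_n((a,a_1,\dots,a_{n-1},a,a,\dots))+\mathcal{V}$, so that $Z_n(a)\leq e^{\mathcal{V}}(\mathcal{L}_\phi^n\mathbf{1}_{[a]})(\overline a)\leq e^{\mathcal{V}}(\mathcal{L}_\phi^n\mathbf{1})(\overline a)\leq e^{\mathcal{V}}\|\mathcal{L}_\phi\mathbf{1}\|_\infty^n$ with $\overline a=(a,a,\dots)$; therefore $P_G(\phi)\leq\log\|\mathcal{L}_\phi\mathbf{1}\|_\infty<+\infty$ whenever $\|\mathcal{L}_\phi\mathbf{1}\|_\infty<+\infty$.
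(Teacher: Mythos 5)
Your proof is correct and follows essentially the same route as the paper's (which is itself a sketch deferring to Sarig's original work): near-multiplicativity of the periodic-orbit sums $Z_n(a)$, a superadditivity/Fekete argument for existence, and a pointwise comparison with $\mathcal{L}_\phi^n\mathbf{1}$ for finiteness. You supply details the paper omits, in particular the explicit symmetric argument showing the limit is independent of $a$, the constant $e^{-2\sum_{n\geq1}V_n(\phi)}$ in place of the paper's stated $e^{-3\sum_{n\geq1}V_n(\phi)}$, and the honest $e^{\sum_{n\geq1}V_n(\phi)}$ factor in the bound $Z_n(a)\leq e^{\sum_{n\geq1}V_n(\phi)}\|\mathcal{L}_\phi\mathbf{1}\|_\infty^n$, which the paper writes without the constant but which is in any case harmless after taking $(1/n)\log$.
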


\begin{proof}
 The proof can be found in \cite{MR1738951} and relies on subadditivity. Indeed, if we define 
\begin{equation}\label{def_Zbold} 
 \mathbf{Z}_n(\phi,a) := \sum_{x : T^nx = x} e^{\phi_n(x)}\mathbf{1}_{[a]}(x),
\end{equation} 
(not to be confused with the partition functions of the pinning model) then for all positive integers $m$ and $n$,
 \begin{equation*}
  \mathbf{Z}_{n+m}(\phi,a) \geq C \mathbf{Z}_n(\phi,a) \mathbf{Z}_m(\phi,a)
 \end{equation*}
where $C=\exp(-3\sum_{n\geq1}V_n(\phi))$. From standard subadditivity argument we deduce that $$P_G(\phi) = \lim_{n\rightarrow+\infty} (1/n)\log \mathbf{Z}_n(\phi,a)$$ exists and that for all $n$,
\begin{equation}\label{P_G_suradd}
 P_G(\phi) \geq \frac{1}{n}\log \mathbf{Z}_n(\phi,a) + \frac{\log C}{n}.
\end{equation}
Finally, $\mathbf{Z}_n(\phi,a) \leq \|\mathcal{L}_{\phi}\mathbf{1} \|_{\infty}^n$ implies $P_G(\phi)\leq \log \|\mathcal{L}_{\phi}\mathbf{1} \|_{\infty}$.
\end{proof}

\begin{rmk}\label{CN_gurevich}
 If $P_G(\phi)$ is finite then we easily get $P_G(\phi + c)= P_G(\phi) + c$ for every constant $c$.
\end{rmk}

\begin{rmk}\label{rmk_gurevich}
 A sufficient condition for finiteness of $\|\mathcal{L}_{\phi}\mathbf{1} \|_{\infty}$ is:
 \begin{equation*}
  \exists C>0,\, u:S\mapsto \mathbb{R}_+ : \sum_{s\in S}u(s)<+\infty\, \mbox{\rm \,and }\, \forall x\in \Sigma,\, e^{\phi(x)}\leq C u(\pi_0(x)),
 \end{equation*}
 where $\pi_0 : \overline{t} = (t_0,t_1,\ldots) \mapsto t_0$ is the projection onto the first coordinate. 
\end{rmk}

\begin{rmk}\label{alter_pression}
 Here are some useful alternative expressions for the Gurevich pressure of a potential with summable variations. First, if the BIP condition holds, then from \cite[Corollary 1]{MR1955261} one has (compare with Equation (\ref{def_gurevich})):
 \begin{equation*}
  P_G(\phi) = \lim_{n\rightarrow +\infty} \frac{1}{n}\log \sum_{x : T^nx = x}e^{\phi_n(x)}.
 \end{equation*}
Suppose now that $\phi$ is a potential with summable variations. We have for all $n\geq 1$ and all $x\in\Sigma$,
\begin{equation*}
 \forall k_1,\ldots,k_n\geq1,\quad |\phi_n([k_1\ldots k_n]^{\texttt{per}}) - \phi_n(k_1\ldots k_n x)| \leq \sum_{m\geq1}V_m(\phi) < +\infty
\end{equation*}
where $[k_1\ldots k_n]^{\texttt{per}}$ is the periodization of the word $k_1\ldots k_n$. From this we deduce that for all $x\in\Sigma$,
\begin{equation}\label{alter_def}
 P_G(\phi) = \lim_{n\rightarrow +\infty} (1/n)\log (\mathcal{L}^n_{\phi}\mathbf{1})(x).
\end{equation}
It is now straightforward using Equations (\ref{iter_operator_fct}) and (\ref{alter_def}) to show that for every sequence of functions $(f_n)_{n\geq1}$ satisfying $$0 < \inf_{n\geq1, y\in\Sigma} f_n(y) \leq \sup_{n\geq1, y\in\Sigma} f_n(y) < +\infty,$$ we have
\begin{equation}\label{alter_def2}
 \forall x\in\Sigma,\quad P_G(\phi) = \lim_{n\rightarrow +\infty} (1/n)\log (\mathcal{L}^n_{\phi}f_n)(x).
\end{equation}
\end{rmk}

\subsection{Theorems}

Under some strong enough assumptions, the transfer operator $\mathcal{L}_{\phi}$ has some properties similar to those of irreducible positive matrices, and an analogue of the Perron-Frobenius theorem can be proved. Historically this was first established by Ruelle for finite alphabets (and more generally in the compact case). This was later extended by several authors, and now include shifts on countable alphabets in the case of potential variations decaying fast enough. The reader can refer to \cite{MR1853808,MR0234697,MR1955261,MR2249785,MR1738951,MR1818392} for background. The following theorem is taken from \cite[Corollary 2 and Theorem 2]{MR1955261}. We remind that the BIP condition is satisfied in our case (see Remark \ref{bip}).

\begin{theo}\label{Ruelle_op_spec}
 If $\phi$ has summable variations and finite Gurevich pressure then, writing $\lambda := \exp(P_G(\phi))$, there exist a continuous function $h : \Sigma \mapsto (0,+\infty)$ and a finite Borel measure $\nu$ which is positive on finite cylinder sets, as well as ergodic, such that:
 \begin{enumerate}
  \item $\mathcal{L}_{\phi}h = \lambda h$,
  \item $\mathcal{L}^*_{\phi}\nu = \lambda \nu$, where $\mathcal{L}^*_{\phi}$ is the dual operator of $\mathcal{L}_{\phi}$,
  \item $\int_\Sigma h d\nu < +\infty$,
  \item $\nu(\Sigma) < +\infty$,
  \item $0< \inf h \leq \sup h < + \infty$,
  \item if $h$ is such that $\int_\Sigma h d\nu = 1$, then
  \begin{equation*}
   \forall n\geq1, \forall a\in S^n,\quad \lambda^{-l}\mathcal{L}_{\phi}^l\mathbf{1}_{[a]} \stackrel{l\rightarrow +\infty}{\longrightarrow} \nu[a] h,
  \end{equation*}
uniformly on every compact set of $\Sigma$.
 \end{enumerate}
\end{theo}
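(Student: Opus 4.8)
The statement is exactly Sarig's generalized Ruelle--Perron--Frobenius theorem for countable Markov shifts satisfying the BIP condition, which holds here by Remark \ref{bip}: once one knows that $\phi$ has summable variations and finite Gurevich pressure, items 1--6 are \cite[Corollary 2 and Theorem 2]{MR1955261}, so the plan is to quote these results after checking that the hypotheses are met. Let me nonetheless sketch the mechanism behind them, which runs in three stages: construct the eigenmeasure $\nu$, then the eigenfunction $h$, and finally prove the convergence in item 6 by a contraction estimate.

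For the eigenmeasure I would seek a $\lambda$-conformal measure, i.e. a finite Borel measure $\nu$ with $\mathcal{L}^*_{\phi}\nu = \lambda\nu$, via a Schauder--Tychonoff fixed point for the map $\mu \mapsto \mathcal{L}^*_{\phi}\mu / \|\mathcal{L}^*_{\phi}\mu\|$ on Borel probability measures. The obstruction is that $\Sigma$ is not compact, so one must show that the relevant family of measures is tight, and this is precisely where BIP is used: it forces uniform control on the mass that can escape to ``infinity'' in the alphabet, which also yields $\nu(\Sigma)<+\infty$ and positivity of $\nu$ on every finite cylinder. Ergodicity of $\nu$ under $T$ then comes from topological mixing of the full shift combined with the Gibbs-type property of $\nu$.

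For the eigenfunction I would look at the normalized iterates $h_n := \lambda^{-n}\mathcal{L}^n_{\phi}\mathbf{1}$. Summable variations give the bounded-distortion estimate $|\phi_n(x)-\phi_n(y)| \le \sum_{m\ge1}V_m(\phi)$ whenever $x$ and $y$ agree on their first $n$ coordinates; inserted into the expansion (\ref{iter_operator_fct}) and combined with BIP, this yields $0 < \inf_n \inf_{\Sigma} h_n \le \sup_n \sup_{\Sigma} h_n < +\infty$ together with equicontinuity of the $h_n$ on compact sets. An Arzel\`a--Ascoli extraction (with an averaging argument to guarantee the limit is a genuine eigenfunction), or alternatively a Birkhoff-cone / Hilbert-metric argument applied directly to $\mathcal{L}_{\phi}$ on a cone of functions with controlled logarithmic oscillation, then produces a continuous $h>0$ with $\mathcal{L}_{\phi}h=\lambda h$ and the bounds of item 5; finiteness of $\int h\,d\nu$ follows from these uniform bounds and $\nu(\Sigma)<+\infty$.

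Item 6 is the exactness of the pair $(h,\nu)$. Passing to the normalized operator $\widetilde{\mathcal{L}}f := (\lambda h)^{-1}\mathcal{L}_{\phi}(hf)$, which fixes $\mathbf{1}$ and is dual to the $T$-invariant probability measure $h\,d\nu$, one has $\widetilde{\mathcal{L}}^{\,l}g \to \int g\,h\,d\nu$ uniformly on compacts for suitable $g$; applying this to $g=\mathbf{1}_{[a]}/h$ gives $\widetilde{\mathcal{L}}^{\,l}(\mathbf{1}_{[a]}/h)=(\lambda^l h)^{-1}\mathcal{L}^l_{\phi}\mathbf{1}_{[a]} \to \int (\mathbf{1}_{[a]}/h)\,h\,d\nu = \nu[a]$, which is exactly $\lambda^{-l}\mathcal{L}^l_{\phi}\mathbf{1}_{[a]} \to \nu[a]h$. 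This convergence is proved by showing that $\widetilde{\mathcal{L}}$ is eventually a contraction in Hilbert's projective metric on an appropriate cone, again using summable variations (for the distortion bound) and BIP (for the uniform estimates). The main obstacle throughout is the lack of compactness of $\Sigma$: each step that is routine in the classical finite-alphabet Ruelle theorem --- existence of the eigenmeasure, extraction of the eigenfunction, the cone contraction --- needs the BIP condition to stand in for compactness, and verifying that BIP really delivers the required tightness and uniform bounds is where the genuine work lies.
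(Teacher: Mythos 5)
Your proposal takes exactly the paper's route: the paper's entire treatment of this statement is to cite \cite[Corollary 2 and Theorem 2]{MR1955261} after observing (Remark \ref{bip}) that the BIP condition is automatic for the full shift $\Sigma=S^{\mathbb{N}}$, which is precisely your opening move. The additional three-stage sketch of Sarig's mechanism (eigenmeasure via a fixed-point/tightness argument, eigenfunction via bounded-distortion and Arzel\`a--Ascoli or a Birkhoff-cone contraction, and convergence via the normalized operator) is a faithful roadmap of how the cited theorem is established, but it is supplementary to what the paper itself records.
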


Let $h$ and $\nu$ be as in Theorem \ref{Ruelle_op_spec} and normalized so that $\int_\Sigma h d\nu = 1$, and let $m$ be a probability measure on $\Sigma$ defined by
\begin{equation}\label{Gibbs_measure}
 dm = h d\nu.
\end{equation}
Then $m$ is a shift-invariant \textit{Gibbs measure} associated to $\phi$, that is there exists a constant $B>0$ so that for all $n\geq1$, $a$ in $S^n$ and $x$ in $[a]$,
\begin{equation*}
 \frac{1}{B} \leq \frac{m([a])}{e^{\phi_n(x) - nP_G(\phi)}} \leq B,
\end{equation*}
see \cite{MR1955261}. From $\nu$ and $h$ we define a Markov chain on $\Sigma$, the transition probabilities of which are:
\begin{equation}\label{chaine_induite}
 \forall x,y\in \Sigma,\quad Q(x,y) := \frac{e^{\phi(y)}h(y)}{\lambda h(x)} \mathbf{1}_{\{Ty = x\}},
\end{equation}
with $\lambda = \exp(P_G(\phi))$. Indeed we have $\sum_{s\in S} Q(x,sx)=1$ for all $x$ in $\Sigma$, from item (1) of Theorem \ref{Ruelle_op_spec}. One can check that $m$ is a stationary distribution for this Markov chain: for all bounded measurable function $\varphi$,
\begin{align*}
 \int (Q\varphi)(x)dm(x)& = \int \sum_{s\in S} \varphi(sx) Q(x,sx) dm(x) \\
 & =\lambda^{-1} \int \sum_{s\in S} \varphi(sx) e^{\phi(sx)} h(sx) d\nu(x)\\
 &= \lambda^{-1} \int \mathcal{L}_{\phi}(\varphi h)(x) d\nu(x)\\
 &= \int \varphi(x)  h(x)d\nu(x) \mbox{\rm \, from item (2) of Theorem \ref{Ruelle_op_spec}}\\
 &= \int \varphi(x) dm(x).
\end{align*}

The second theorem is about the sharp behaviour of the Gurevich pressure for a one-parameter family of potentials $\{\phi + t\psi\}_{t\geq 0}$, when $t$ goes to $0$, under some assumptions on $\phi$ and $\psi$. This result is taken from \cite[Theorems 4 and 5, Remark (3) on page 635]{MR2249785}.

\begin{theo}\label{Gurevich_pressure_critical_exp}
 Let $\phi$ and $\psi$ be potentials with exponentially decreasing variations such that
 \begin{enumerate}
  \item $\sup \phi < + \infty$,
  \item $\sup \psi<+\infty$,
  \item $P_G(\phi)<+\infty$.
 \end{enumerate}
 Let $m$ be the invariant Gibbs measure associated to $\phi$, which is defined by Equation (\ref{Gibbs_measure}). In the following, $\tilde{L}$ is a slowly varying function.
 \begin{enumerate}
  \item If $m(\{x : \psi(x) < -n\}) \stackrel{n\rightarrow +\infty}{\sim} \frac{n^{-\alpha}\tilde{L}(n)}{\Gamma(1-\alpha)}$, with $0<\alpha<1$, then
  \begin{equation*}
   P_G(\phi + t\psi) - P_G(\phi) \stackrel{t\searrow 0}{\sim} - \tilde{L}(1/t)t^{\alpha};
  \end{equation*}
\item if $\psi \in L^1(m)$ then
\begin{equation*}
 P_G(\phi + t\psi) - P_G(\phi)  \stackrel{t\searrow 0}{\sim} t \times \left(\int \psi dm\right);
\end{equation*}
\item if $\alpha=1$ and $\psi\notin L^1(m)$ then
\begin{equation*}
 P_G(\phi + t\psi) - P_G(\phi)  \stackrel{t\searrow 0}{\sim} t \tilde{L}(1/t),
\end{equation*}
where $\tilde{L}(n) \stackrel{n\rightarrow +\infty}{\sim} \int(\psi \vee (-n))dm$.
 \end{enumerate}
\end{theo}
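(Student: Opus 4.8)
The plan is to reduce the theorem to the single equivalence
\begin{equation*}
 P_G(\phi + t\psi) - P_G(\phi) \stackrel{t\searrow 0}{\sim} I(t), \qquad I(t) := \int_\Sigma\bigl(e^{t\psi}-1\bigr)\,dm ,
\end{equation*}
and then to obtain each of the three cases from a Tauberian analysis of $I(t)$. First I would normalise: subtracting $P_G(\phi)$ (Remark~\ref{CN_gurevich}) we may assume $P_G(\phi)=0$, and conjugating $\mathcal{L}_\phi$ by the eigenfunction $h$ of Theorem~\ref{Ruelle_op_spec} we may assume $\mathcal{L}_\phi\mathbf{1}=\mathbf{1}$, so that $e^\phi$ is the transition kernel of the Markov chain \eqref{chaine_induite} and $m$ from \eqref{Gibbs_measure} is both its stationary measure and the unique equilibrium state of $\phi$. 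Since $\sup\psi<+\infty$, the potential $\phi+t\psi$ has exponentially decaying variations and Gurevich pressure bounded by $t\sup\psi$, so Theorem~\ref{Ruelle_op_spec} applies to it and $p(t):=P_G(\phi+t\psi)$ equals the logarithm of the leading eigenvalue of $\mathcal{L}_{\phi+t\psi}$; moreover $t\mapsto p(t)$ is convex and finite for $t\geq 0$ small.

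To prove the equivalence I would treat Case~(2) by soft arguments: when $\psi\in L^1(m)$, convexity of $p$ together with uniqueness of the equilibrium state gives the right derivative $p'(0^+)=\int\psi\,dm$, so $p(t)-p(0)=t\int\psi\,dm+o(t)$, while dominated convergence (using $\sup\psi<+\infty$) gives $I(t)=t\int\psi\,dm+o(t)$; the two agree to leading order. Cases~(1) and~(3), where $\int\psi\,dm=-\infty$, are the hard ones. Here I would fix $a\in S$, let $\varphi$ be the first return time to $[a]$, and induce on $[a]$. Using Sarig's generalised Vere-Jones/positive-recurrence theory — positive recurrence of $\phi+t\psi$ persists for small $t>0$ — the pressure $p(t)$ is characterised by a renewal-type identity $\sum_{n\geq1}Z^{*}_n(\phi+t\psi)\,e^{-np(t)}=1$ in terms of the first-return partition functions $Z^{*}_n$, normalised (via $\mathcal{L}_\phi\mathbf{1}=\mathbf{1}$) so that $\sum_nZ^{*}_n(\phi)=1$. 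Since $Z^{*}_n(\phi+t\psi)$ reproduces, up to bounded distortion, the law of $\varphi$ and of the excursion sum $\overline\psi:=\sum_{k=0}^{\varphi-1}\psi\circ T^k$ under $m_a:=m(\cdot\mid[a])$, and $\varphi$ has exponential tails under $m_a$ because $\phi$ has exponentially decaying variations (so the factor $e^{-np(t)}$ costs $-p(t)/m([a])+O(p(t)^2)$ by Kac), subtracting the identity at $t=0$ reduces everything to the asymptotics of $\int_{[a]}(e^{t\overline\psi}-1)\,dm_a$. A one-big-jump estimate — a large negative value of $\overline\psi$ being produced to leading order by a single large negative value of $\psi$ along the excursion — combined with the Kac identity $\int_{[a]}\sum_{k<\varphi}g\circ T^k\,dm_a=m([a])^{-1}\int_\Sigma g\,dm$ applied to $g=e^{t\psi}-1$ gives $\int_{[a]}(e^{t\overline\psi}-1)\,dm_a=m([a])^{-1}I(t)+o(I(t))$, hence $p(t)\sim I(t)$.

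The three asymptotics of $I(t)$ then follow from Karamata's Tauberian theorem. Because $\psi^+\in L^1(m)$ one has $\int_{\{\psi\geq0\}}(e^{t\psi}-1)\,dm=O(t)$, so $I(t)=O(t)-\int_{\{\psi<0\}}(1-e^{t\psi})\,dm$, and integrating by parts with $\overline G(x):=m(\psi<-x)$ gives $\int_{\{\psi<0\}}(1-e^{t\psi})\,dm=t\int_0^\infty e^{-tx}\overline G(x)\,dx$. In Case~(1), $\overline G(x)\sim x^{-\alpha}\tilde{L}(x)/\Gamma(1-\alpha)$ yields $\int_0^\infty e^{-tx}\overline G(x)\,dx\sim t^{-(1-\alpha)}\tilde{L}(1/t)$, whence (as $t^\alpha\gg t$) $I(t)\sim-t^\alpha\tilde{L}(1/t)$; in Case~(3), $\int_0^x\overline G=\int\psi^+\,dm-\int(\psi\vee(-x))\,dm$ is slowly varying and Karamata gives $I(t)\sim t\int(\psi\vee(-1/t))\,dm=t\tilde{L}(1/t)$; and Case~(2) is the degenerate regime handled above. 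Combined with $p(t)\sim I(t)$, this is exactly the assertion.

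The main obstacle is the reduction in the second paragraph. Setting up the renewal equation rigorously — the persistence of positive recurrence of $\phi+t\psi$ for small $t$, so that the induced pressure truly vanishes and the first-return weights have total mass exactly $1$, and the control of the bounded-distortion corrections despite $\overline\psi$ being unbounded below — is precisely where Sarig's recurrence theory for countable Markov shifts and all of the hypotheses (exponentially decaying variations of $\phi$ and $\psi$, $\sup\phi<+\infty$, $\sup\psi<+\infty$, $P_G(\phi)<+\infty$) enter; and the one-big-jump tail transfer, though a familiar mechanism for random sums of heavy-tailed increments, must be carried out in the dependent, dynamical setting with all errors controlled to $o(I(t))$. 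Note that a purely spectral perturbation of $\mathcal{L}_{\phi+t\psi}$ around $\mathcal{L}_\phi$ does not suffice, since the multiplicative perturbation $e^{t\psi}-1$ does not tend to $0$ in supremum norm. By contrast, the Tauberian step of the third paragraph and the Case~(2) argument are routine.
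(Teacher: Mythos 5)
The paper does not prove Theorem~\ref{Gurevich_pressure_critical_exp}: it is imported from \cite[Theorems 4 and 5, and Remark (3)]{MR2249785}, so there is no in-paper argument to compare yours against. Judged on its own, your roadmap --- normalise to $P_G(\phi)=0$ and $\mathcal{L}_{\phi}\mathbf{1}=\mathbf{1}$, induce on a cylinder $[a]$, write a renewal identity $\sum_{n\geq1}Z_n^{*}(\phi+t\psi)e^{-np(t)}=1$ for the first-return partition functions, and extract $p(t)$ by a Karamata Tauberian analysis of the excursion functional --- is architecturally close in spirit to the cited proof, which is built on induced systems, discriminants and renewal sequences, and your Tauberian computation of $I(t)=\int(e^{t\psi}-1)\,dm$ in the three regimes is correct and consistent with the stated asymptotics.

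That said, the technical core is left open, and the places where you wave your hands are precisely where the work is. First, ``up to bounded distortion'' only gives two-sided bounds with a fixed Gibbs constant $B$ between $Z_n^{*}(\phi+t\psi)$ and $\int_{[a]}e^{t\overline{\psi}}\mathbf{1}_{\{\varphi=n\}}\,dm_a$; to conclude $p(t)\sim m([a])\int_{[a]}(e^{t\overline{\psi}}-1)\,dm_a$ with a genuine $\sim$ you must cancel the distortion using the leading eigendata of the \emph{induced} operator, which the sketch does not do. Second, the one-big-jump identity $\int_{[a]}(e^{t\overline{\psi}}-1)\,dm_a=m([a])^{-1}I(t)+o(I(t))$ has to be proved in a dependent dynamical setting with all errors controlled to $o(I(t))$ uniformly as $t\searrow0$ --- this is delicate and is not an off-the-shelf heavy-tail lemma, since $\overline{\psi}$ is a sum of a random, correlated number of correlated increments. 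Third, in Case~(2) you claim $p'(0^+)=\int\psi\,dm$ from convexity and uniqueness of the equilibrium state; the variational principle and convexity give the lower bound $p'(0^+)\geq\int\psi\,dm$ for free, but the matching upper bound needs continuity of $t\mapsto\int\psi\,d\mu_t$ at $t=0$ (with $\mu_t$ the equilibrium state of $\phi+t\psi$), which is not automatic when $\psi$ is unbounded below. In short, the reduction $p(t)\sim I(t)$ \emph{is} the theorem; the outline is a plausible roadmap in the right direction, but it does not close that reduction, and you acknowledge as much.
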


Note that in \cite[Theorem 5]{MR2249785}, the term ``equilibrium measure'' is used instead. Nevertheless, we can safely replace it by ``Gibbs measure'' (ibid. footnote on page 637).

\section{Proofs of Theorems \MakeLowercase{\ref{critical_exp}} and \MakeLowercase{\ref{critical_sum}}}\label{preuvedestheoremes...}

Both proofs use the tools of the previous section in the case $S = \mathbb{N}^*$.

\paragraph{Potential G and transfer operators.} Let us start with some definitions and notations. Let $\Sigma = (\mathbb{N}^*)^{\mathbb{N}}$ be the space of sequences of positive integers. If $\overline{t} = (t_0, t_1, \ldots)$ is in $\Sigma$ and $n\geq 1$, we denote by $(n \overline{t})$ the concatenation $(n,t_0,t_1,\ldots)$. We introduce the potentials
\begin{align}
G(\overline{t}) &= \sum_{k\geq0} \rho_{t_0 + \ldots + t_k},\label{def_potentialG}\\
G^{(n)}(\overline{t}) &= \sum_{k=0}^{n-1} \rho_{t_0 + \ldots + t_k},\quad \forall n\geq1.\label{truncated_G1}
\end{align}
Notice that $G$ is well defined if $\sum |\rho_n|$ is finite and that the truncated versions $G^{(n)}$ are different from the one we use to prove Theorem \ref{asympt} (see Equation (\ref{truncated_G})). 

For all $\beta>0$, define:
\begin{equation*}
 \phi_{\beta} : \left\{ \begin{array}{ccc} \Sigma & \rightarrow & \mathbb{R} \\ \overline{t} & \mapsto & \beta^2 G(\overline{t}) + \log K(t_0) \end{array}\right.,
\end{equation*}
or equivalently,
\begin{equation*}
 \phi_{\beta} := \beta^2 G + \log K \circ \pi_0,
\end{equation*}
($\pi_0$ has been defined in Remark \ref{rmk_gurevich}). If $P_G(\phi_{\beta})$ exists, then it must be finite from Remark \ref{CN_gurevich} ($G$ is bounded). We can then define for all $\beta>0$ and $F\geq 0$,
\begin{equation*}
 \phi_{\beta,F} : \left\{ \begin{array}{ccc} \Sigma & \rightarrow & \mathbb{R} \\ \overline{t} & \mapsto &  \beta^2 G(\overline{t}) + \log K(t_0) - P_G(\phi_{\beta}) - F\times t_0 \end{array}\right.,
\end{equation*}
which also writes
\begin{equation*}
 \phi_{\beta,F} = \beta^2 G + \log K \circ \pi_0 - P_G(\phi_{\beta}) - F \times \pi_0 = \phi_{\beta} - P_G(\phi_{\beta}) - F\times \pi_0.
\end{equation*}
Again, if $P_G(\phi_{\beta,F})$ exists then it is finite from the same argument as above. The transfer operator associated to $\phi_{\beta}$ (resp. $\phi_{\beta,F}$) is denoted by $\mathcal{L}_{\beta}$ (resp. $\mathcal{L}_{\beta,F}$). Therefore we have:
\begin{align*}
 \mathcal{L}_{\beta}f &: \left\{ \begin{array}{ccc} \Sigma & \rightarrow & \mathbb{R} \\ \overline{t} & \mapsto &  \sum_{n\geq1} e^{\beta^2 G(n\overline{t})}K(n) f(n\overline{t}) \end{array}  \right.,\\
 \mathcal{L}_{\beta,F}f &: \left\{ \begin{array}{ccc} \Sigma & \rightarrow & \mathbb{R} \\ \overline{t} & \mapsto &  \sum_{n\geq1} e^{\beta^2 G(n\overline{t})-P_G(\phi_{\beta})-Fn}K(n) f(n\overline{t}) \end{array}  \right. .
\end{align*}
Note that $\mathcal{L}_{\beta}$ is not the same as $\mathcal{L}_{\beta,F=0}$. Let us now remark that the decay of variations of $\phi_{\beta}$ and $\phi_{\beta,F}$ is linked to the decay of the correlations $(\rho_n)_{n\geq0}$. More precisely, we have:
\begin{pr}\label{lien_variations}
 If $\sum_{n\geq 1} n |\rho_n|$ is finite then for all $\beta\geq0$ and $F\geq0$, the potentials $\phi_{\beta}$ and $\phi_{\beta,F}$ have summable variations. If the $\rho_n$'s decrease exponentially fast then so do the variations of $\phi_{\beta}$ and $\phi_{\beta,F}$ (with the same exponent).
\end{pr}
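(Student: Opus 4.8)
The plan is to reduce the statement to an estimate on the variations $V_m(G)$ of the potential $G$ from (\ref{def_potentialG}), and then to dominate those by a tail of the correlation sequence $(\rho_n)$. Note first that $G$ is well defined in both regimes, since $\sum_n n|\rho_n|<\infty$ and exponential decay both force $\sum_n|\rho_n|<\infty$. The first thing I would record is that the terms distinguishing $\phi_\beta$ and $\phi_{\beta,F}$ from $\beta^2 G$, namely $\log K\circ\pi_0$, the constant $-P_G(\phi_\beta)$, and $-F\,\pi_0$, all depend on $\overline t$ through the single coordinate $t_0$ (or not at all). Hence for every $m\geq1$ and every pair $x,y\in\Sigma$ with $x_0=y_0,\dots,x_{m-1}=y_{m-1}$, these terms cancel in $\phi_\beta(x)-\phi_\beta(y)$ (and in $\phi_{\beta,F}(x)-\phi_{\beta,F}(y)$), so that
\begin{equation*}
V_m(\phi_\beta)=V_m(\phi_{\beta,F})=\beta^2\,V_m(G),\qquad m\geq1,
\end{equation*}
and it is enough to bound $V_m(G)$.

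The core estimate goes as follows. Fix $m\geq1$ and take $x,y$ agreeing on their first $m$ coordinates. Writing $G(x)-G(y)=\sum_{k\geq0}\bigl(\rho_{x_0+\dots+x_k}-\rho_{y_0+\dots+y_k}\bigr)$, every term with $k\leq m-1$ vanishes because then $x_0+\dots+x_k=y_0+\dots+y_k$, so
\begin{equation*}
|G(x)-G(y)|\leq\sum_{k\geq m}|\rho_{x_0+\dots+x_k}|+\sum_{k\geq m}|\rho_{y_0+\dots+y_k}|.
\end{equation*}
Since each $x_i\geq1$, the map $k\mapsto x_0+\dots+x_k$ is strictly increasing and $x_0+\dots+x_k\geq k+1$; in particular the indices $\{x_0+\dots+x_k:k\geq m\}$ are pairwise distinct integers, all at least $m+1$. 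Reindexing the sum by the value of the partial sum therefore gives $\sum_{k\geq m}|\rho_{x_0+\dots+x_k}|\leq\sum_{j\geq m}|\rho_j|$, and likewise for $y$, so $V_m(G)\leq 2\sum_{j\geq m}|\rho_j|$.

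It then remains to sum over $m$. By interchanging the order of summation in a series of nonnegative terms,
\begin{equation*}
\sum_{m\geq1}V_m(G)\leq 2\sum_{m\geq1}\sum_{j\geq m}|\rho_j|=2\sum_{j\geq1}j\,|\rho_j|<+\infty
\end{equation*}
whenever $\sum_n n|\rho_n|<\infty$, which together with the first display gives summable variations for $\phi_\beta$ and $\phi_{\beta,F}$. If instead $|\rho_n|\leq C\varrho^n$ with $\varrho\in(0,1)$, then $\sum_{j\geq m}|\rho_j|\leq C\varrho^m/(1-\varrho)$, hence $V_m(G)\leq \tfrac{2C}{1-\varrho}\varrho^m$ and $V_m(\phi_\beta),V_m(\phi_{\beta,F})\leq\tfrac{2C\beta^2}{1-\varrho}\varrho^m$, i.e. exponentially fast decaying variations with the same exponent $\varrho$. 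I do not anticipate a genuine obstacle here; the two points to get right are the bookkeeping observation that the $t_0$-dependent terms drop out of $V_m$ for $m\geq1$, and the use of the injectivity of $k\mapsto x_0+\dots+x_k$ to bound the tail of the series defining $G$ by an honest tail $\sum_{j\geq m}|\rho_j|$ of the correlations.
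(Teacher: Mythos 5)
Your proof is correct and follows the same route as the paper: reduce $V_m(\phi_\beta)=V_m(\phi_{\beta,F})=\beta^2 V_m(G)$ by noting that the extra terms depend only on $t_0$, then bound $V_m(G)\leq 2\sum_{j\geq m}|\rho_j|$ using the fact that only indices $k\geq m$ contribute and that $k\mapsto x_0+\dots+x_k$ is injective with values $\geq m+1$. You simply make explicit the last two steps (summing over $m$ via Fubini, and the geometric tail bound in the exponential case) that the paper leaves as ``easily deduced''.
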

\begin{proof}
 First, we have for all $n\geq 1$,
 \begin{equation}\label{decayG1}
  V_n(\phi_{\beta}) = V_n(\phi_{\beta,F}) = \beta^2 V_n(G).
 \end{equation}
Constants and functions depending only on the first coordinate play no role in the potential variations (see Equation (\ref{var_pot})). Observe now that for all $n\geq 1$ and all $x$, $y$ in $\Sigma$ such that $x_0 = y_0,\ldots, x_{n-1}=y_{n-1}$,
\begin{equation*}
 |G(x) - G(y)| \leq \left|\sum_{k\geq n} \rho_{x_0 + \ldots + x_k} - \sum_{k\geq n} \rho_{y_0 + \ldots + y_k}\right| \leq 2 \sum_{k\geq n}|\rho_k|.
 \end{equation*}
 Therefore,
\begin{equation}\label{decayG2} 
 V_n(G)\leq 2 \sum_{k\geq n}|\rho_k|.
 \end{equation}
 Proposition \ref{lien_variations} is then easily deduced from Equations (\ref{decayG1}) and (\ref{decayG2}).
\end{proof}

\begin{rmk}\label{rmk_existence}
 From Propositions \ref{def_gurevich_pressure} and \ref{lien_variations}, if $\sum_{n\geq 1} n |\rho_n|$ is finite then the limits $P_G(\phi_{\beta})$ and $P_G(\phi_{\beta,F})$ exist and are finite.
\end{rmk}

\begin{pr}\label{fonction_P_G}
 Suppose that $\sum_{n\geq1}n|\rho_n| < + \infty$. For all $\beta>0$, the function $F\in\mathbb{R}_+ \mapsto P_G(\phi_{\beta,F})$ is continuous and (strictly) decreasing. It is equal to $0$ if $F=0$ and goes to $-\infty$ as $F$ goes to $+\infty$.
\end{pr}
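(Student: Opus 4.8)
The plan is to establish each of the four claimed properties of $F \mapsto P_G(\phi_{\beta,F})$ in turn, using the characterization of the Gurevich pressure from Proposition~\ref{def_gurevich_pressure} together with Remark~\ref{alter_pression}, and in particular the expression $P_G(\phi) = \lim_{n} (1/n) \log (\mathcal{L}^n_\phi \mathbf{1})(x)$ of Equation~(\ref{alter_def}). Throughout, note that by Remark~\ref{rmk_existence} the pressure $P_G(\phi_{\beta,F})$ is well defined and finite for every $F \geq 0$, since $\phi_{\beta,F}$ has summable variations (Proposition~\ref{lien_variations}) and its transfer operator applied to $\mathbf{1}$ is bounded. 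The value at $F = 0$ is immediate from the definition of $\phi_{\beta,F}$: by Remark~\ref{CN_gurevich}, $P_G(\phi_{\beta,F}) = P_G(\phi_\beta - P_G(\phi_\beta) - F\pi_0) = P_G(\phi_\beta - F\pi_0) - P_G(\phi_\beta)$, and at $F=0$ this is $P_G(\phi_\beta) - P_G(\phi_\beta) = 0$.

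For monotonicity, I would work with the reduced potential $\phi_\beta - F\pi_0$ (so $P_G(\phi_{\beta,F}) = P_G(\phi_\beta - F\pi_0) - P_G(\phi_\beta)$) and exploit that $\pi_0 \geq 1$ pointwise on $\Sigma = (\mathbb{N}^*)^{\mathbb{N}}$. Hence for $F' > F \geq 0$ we have $\phi_\beta - F'\pi_0 \leq \phi_\beta - F\pi_0 - (F'-F)$, so from Equation~(\ref{iter_operator_fct}) the iterates satisfy $(\mathcal{L}^n_{\phi_\beta - F'\pi_0}\mathbf{1})(x) \leq e^{-n(F'-F)}(\mathcal{L}^n_{\phi_\beta - F\pi_0}\mathbf{1})(x)$; taking $(1/n)\log$ and letting $n \to \infty$ via Equation~(\ref{alter_def}) gives $P_G(\phi_\beta - F'\pi_0) \leq P_G(\phi_\beta - F\pi_0) - (F'-F)$, which proves strict monotonicity (indeed with a quantitative decrease of at least $F'-F$). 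This quantitative bound also immediately yields $P_G(\phi_{\beta,F}) \leq P_G(\phi_{\beta,0}) - F = -F \to -\infty$ as $F \to +\infty$.

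For continuity, the cleanest route is convexity: the map $F \mapsto \log(\mathcal{L}^n_{\phi_\beta - F\pi_0}\mathbf{1})(x)$ is convex in $F$ for each fixed $n$ and $x$ (it is the logarithm of a sum of terms of the form $c_{s_1,\ldots,s_n} e^{-F(s_1 + \cdots + s_n)}$ with $c \geq 0$, i.e. a log-Laplace transform), hence its pointwise limit $F \mapsto P_G(\phi_\beta - F\pi_0)$ is convex on $[0,\infty)$; a finite convex function on an interval is continuous on its interior, and continuity at the endpoint $F=0$ follows from monotone convergence of $\mathcal{L}^n_{\phi_\beta - F\pi_0}\mathbf{1} \uparrow \mathcal{L}^n_{\phi_\beta}\mathbf{1}$ as $F \downarrow 0$ combined with the sandwiching $P_G(\phi_{\beta,F}) \in [-F, 0]$ established above, which already forces $P_G(\phi_{\beta,F}) \to 0$. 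Alternatively one can argue continuity directly by a dominated-convergence estimate on the series defining $\mathcal{L}_{\beta,F}\mathbf{1}$, using $\sum_n e^{\beta^2 G(n\overline t)} K(n) < \infty$ as an integrable dominating function uniform in $F$ on compact sets.

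The main obstacle, and the only step requiring care rather than routine manipulation, is continuity at the left endpoint $F = 0$: convexity alone gives continuity only on the open interval, so one genuinely needs the two-sided squeeze $-F \leq P_G(\phi_{\beta,F}) \leq 0$ (the lower bound from the quantitative monotonicity estimate against $F=0$, the upper bound from monotonicity and $P_G(\phi_{\beta,0})=0$) to conclude $\lim_{F \downarrow 0} P_G(\phi_{\beta,F}) = 0 = P_G(\phi_{\beta,0})$. Everything else is a direct consequence of the pressure formula~(\ref{alter_def}), the pointwise bound $\pi_0 \geq 1$, and Remark~\ref{CN_gurevich}.
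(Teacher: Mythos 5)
The step you flag as the only delicate one does not go through. Your quantitative monotonicity estimate is $P_G(\phi_\beta - F'\pi_0) \leq P_G(\phi_\beta - F\pi_0) - (F'-F)$; with $F=0$ this reads $P_G(\phi_{\beta,F'}) \leq -F'$, which is an \emph{upper} bound on the same side as the plain monotonicity bound $P_G(\phi_{\beta,F'})\leq 0$, not the lower bound $-F' \leq P_G(\phi_{\beta,F'})$ that your squeeze requires. Because $\pi_0$ is unbounded on $\Sigma=(\mathbb{N}^*)^{\mathbb{N}}$, there is no pointwise inequality $\phi_\beta - F\pi_0 \geq \phi_\beta - cF$ to produce such a lower bound, and in fact $P_G(\phi_{\beta,F}) \geq -F$ is false in the heavy-tailed case: for $0<\alpha<1$, Theorem~\ref{Gurevich_pressure_critical_exp} together with Equation~(\ref{equ_mbeta}) gives $P_G(\phi_{\beta,F}) \sim -c_\beta' \tilde{L}(1/F)F^\alpha$ as $F\searrow0$, which is eventually much more negative than $-F$. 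The monotone-convergence remark about $\mathcal{L}^n_{\phi_\beta - F\pi_0}\mathbf{1}$ (and the dominated-convergence alternative) does not repair this either: it controls each fixed-$n$ iterate, not the $n\to\infty$ limit defining the pressure.

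The correct ingredient, used in the paper, is a uniform-in-$F$ lower bound coming from supermultiplicativity of the periodic sums: with $C = \exp\left(-3\sum_{m\geq1} V_m(\phi_\beta)\right)$, which is independent of $F$ since $V_m(\phi_{\beta,F})=V_m(\phi_\beta)$, one has $\mathbf{Z}_{n+m}(\phi_{\beta,F},a) \geq C\,\mathbf{Z}_n(\phi_{\beta,F},a)\,\mathbf{Z}_m(\phi_{\beta,F},a)$, hence by Fekete $P_G(\phi_{\beta,F}) \geq \frac{1}{n}\log\mathbf{Z}_n(\phi_{\beta,F},a) + \frac{\log C}{n}$ for every $n$ (this is Equation~(\ref{P_G_suradd})). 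Choosing $n_0$ so that the right-hand side at $F=0$ exceeds $-\epsilon$ and then using continuity in $F$ of the single finite-$n_0$ quantity $\mathbf{Z}_{n_0}(\phi_{\beta,F},a)$ yields $P_G(\phi_{\beta,F}) \geq -2\epsilon$ for all $F$ small enough, which is the needed right-continuity at $0$. The remainder of your proposal---the value $0$ at $F=0$ via Remark~\ref{CN_gurevich}, strict decrease and divergence to $-\infty$ from $\pi_0\geq1$, and continuity on $(0,\infty)$ via convexity of the log-partition sums---matches the paper's proof and is fine.
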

\begin{proof}
 First, $P_G(\phi_{\beta})$ and $P_G(\phi_{\beta,F})$ exist and are finite from Remark \ref{rmk_existence}. Moreover (see Remark \ref{rmk_gurevich}),
\begin{equation}\label{null_pressure}
 P_G(\phi_{\beta,F=0}) = P_G(\phi_{\beta}- P_G(\phi_{\beta})) = 0.
 \end{equation}
Let $a$ be any positive integer. For all $F_1$ and $F_2$ in $\mathbb{R}_+$ we have
\begin{equation*}
 \forall \overline{t}\in\Sigma, \quad \phi_{\beta,F_1+F_2}(\overline{t}) \leq \phi_{\beta,F_1}(\overline{t}) - F_2,
\end{equation*}
hence (recall Equation (\ref{def_Zbold}))
\begin{equation*}
 \mathbf{Z}_n(\phi_{\beta,F_1+F_2},a) \leq \mathbf{Z}_n(\phi_{\beta,F_1},a) \exp(-F_2n),
\end{equation*}
and taking the limit in $n$,
\begin{equation}\label{decr_P_G}
 P_G(\phi_{\beta,F_1+F_2}) \leq P_G(\phi_{\beta,F_1}) - F_2. 
\end{equation}
From this we deduce that $F\mapsto P_G(\phi_{\beta,F})$ is strictly decreasing and that $P_G(\phi_{\beta,F})$ tends to $-\infty$ as $F$ tends to $+\infty$. Continuity on $(0,+\infty)$ is a consequence of convexity. Indeed, for all $n$, the function $F\mapsto \log \mathbf{Z}_n(\phi_{\beta,F},a)$ is convex (a property that is conserved by taking the limit in $n$) since for $F>0$, $\partial^2_F \log \mathbf{Z}_n(\phi_{\beta,F},a)$  can be written as a variance, which is nonnegative.
 Let us now prove the continuity (on the right) at $F=0$. Let $\epsilon>0$. From Equation (\ref{null_pressure}),
$$(1/n)\log \mathbf{Z}_n(\phi_{\beta,F=0},a) \stackrel{n\rightarrow +\infty}{\longrightarrow} 0,$$
therefore there exists an integer $n_0$ such that $$\frac{1}{n_0}\log \mathbf{Z}_{n_0}(\phi_{\beta,F=0},a) + \frac{\log C}{n_0} \geq -\epsilon,$$
where $C$ is the constant appearing in Equation (\ref{P_G_suradd}). From the same equation,
$$P_G(\phi_{\beta,F}) \geq \frac{1}{n_0}\log \mathbf{Z}_{n_0}(\phi_{\beta,F},a) + \frac{\log C}{n_0},$$
and by continuity of $\mathbf{Z}_{n_0}(\phi_{\beta,F},a)$ w.r.t $F$ , we deduce that $P_G(\phi_{\beta,F})\geq -2\epsilon$ for all $F$ small enough, which ends the proof.
\end{proof}

\paragraph{Characterization of the annealed free energy.} The starting point to prove Theorems \ref{critical_sum} and \ref{critical_exp} is the following characterization of the annealed free energy:
\begin{pr}\label{inf_caract}
 Suppose that $\sum_{n\geq1}n|\rho_n|$ is finite. For all $\beta\geq0$,
 \begin{enumerate}
  \item if $h\leq -(\beta^2/2) - P_G(\phi_{\beta})$ then $$F^a(\beta,h)=0,$$
  \item  if $h = -(\beta^2/2) - P_G(\phi_{\beta}) + \delta$ with $\delta>0$ then $F^a(\beta,h)$ is the unique positive solution of the following equation (w.r.t. $F$):
 \begin{equation}\label{inf_equ1}
P_G(\phi_{\beta,F}) = -\delta.
 \end{equation}
 \end{enumerate}
 \end{pr}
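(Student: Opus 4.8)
The plan is to connect the annealed partition function $Z^a_{n,\beta,h}$ to iterates of the transfer operator $\mathcal{L}_\beta$ (or $\mathcal{L}_{\beta,F}$) and then read off the free energy from the Gurevich pressure. First I would introduce a ``past-augmented'' version of the partition function: instead of starting the renewal at $\tau_0=0$ with no history, I fix an infinite sequence of past interarrival times $\overline{t}\in\Sigma$ and define, for a renewal path with $\imath_n$ renewals and gaps $s_1,\dots,s_{\imath_n}$ ending at $n$, a weight that includes the correlation term $\beta^2\sum_{k<l}\rho_{l-k}\delta_k\delta_l$ as well as the cross terms between the new renewals and the past. The point of the augmentation is that the correlation sum $\sum_{k\ge 0}\rho_{t_0+\dots+t_k}$ appearing in $G(\overline t)$ is exactly the ``potential felt'' by a renewal when the sequence of preceding gaps (including the infinite past) is $\overline t$. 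Summing over all renewal configurations of total length $n$ then produces precisely $(\mathcal{L}_\beta^n f)(x)$-type expressions, up to the homogeneous weight $e^{(h+\beta^2/2)\imath_n}$ and boundary effects. Since $\sum n|\rho_n|<\infty$, Proposition~\ref{lien_variations} guarantees $\phi_\beta$ has summable variations and (Remark~\ref{rmk_existence}) $P_G(\phi_\beta)$ is finite, so all these operators are well defined on bounded functions.

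Next I would make the bookkeeping precise. Writing $h = -(\beta^2/2) - P_G(\phi_\beta) + \delta$, the extra shift $-P_G(\phi_\beta)$ absorbs the pressure normalization and $-Fn$ (with $F$ to be determined) turns the weight into the one defining $\mathcal{L}_{\beta,F}$. One then shows that the $n$-th root asymptotics of the modified partition function equals $\exp(P_G(\phi_{\beta,F})+F+\delta)$ evaluated with the correct $F$: more precisely, using Equation~(\ref{alter_def2}) from Remark~\ref{alter_pression}, one has $(1/n)\log(\mathcal{L}_{\beta,F}^n f_n)(x)\to P_G(\phi_{\beta,F})$ for any sequence of functions bounded above and below. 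The renewal-theoretic identity should give $Z^a_{n,\beta,h}\asymp e^{n(F+\delta)}(\mathcal{L}_{\beta,F}^n f_n)(x)$ for an appropriate choice of $f_n$ accounting for the endpoint constraint $\delta_n=1$ and the past-correlation corrections; taking logarithms and dividing by $n$ gives $F^a(\beta,h) = F + \delta + P_G(\phi_{\beta,F})$ whenever this is the limit, so $F^a(\beta,h)$ is characterized by the fixed-point relation $P_G(\phi_{\beta,F})=-\delta$ with $F=F^a(\beta,h)$, which is (\ref{inf_equ1}). Existence and uniqueness of the solution follow from Proposition~\ref{fonction_P_G}: $F\mapsto P_G(\phi_{\beta,F})$ is continuous, strictly decreasing, equals $0$ at $F=0$ and tends to $-\infty$, so for each $\delta>0$ the equation $P_G(\phi_{\beta,F})=-\delta$ has exactly one positive root. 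For part (1), when $\delta\le 0$ the same monotonicity shows $P_G(\phi_{\beta,F})<-\delta$ already for all $F>0$ while the free energy is nonnegative by definition, forcing $F^a(\beta,h)=0$; alternatively one bounds $Z^a_{n,\beta,h}$ directly and shows subexponential growth.

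The main obstacle, and the step deserving the most care, is controlling the boundary and past-augmentation errors: one must show that adding the infinite past changes $(1/n)\log Z^a_{n,\beta,h}$ by $o(n)$ (in fact $O(1)$), and that the endpoint constraint $\delta_n=1$ together with the interaction between the ``new'' block $\{1,\dots,n\}$ and the fictitious past contributes only a bounded multiplicative factor, uniformly in $n$ and in the choice of past $x\in\Sigma$. This is exactly where $\sum_n n|\rho_n|<\infty$ is used quantitatively — the total cross-interaction between a finite renewal configuration in $\{1,\dots,n\}$ and everything outside is bounded by $\beta^2\sum_{k\ge 1}k|\rho_k|$, as in the proof of Proposition~\ref{existence} — so the correction is absorbable and the free energy of the augmented and original models coincide. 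Once this comparison is in hand, combining it with the pressure asymptotics (\ref{alter_def2}) and Proposition~\ref{fonction_P_G} yields both claims.
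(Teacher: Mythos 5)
Your high-level plan (augment with an infinite past controlled by $\sum n|\rho_n|<\infty$, identify the weights with the potential $\phi_{\beta,F}$, and appeal to Proposition~\ref{fonction_P_G} for existence and uniqueness of the root) matches the paper's Steps~1--2 and its use of monotonicity of $F\mapsto P_G(\phi_{\beta,F})$. However, the central identification you propose is incorrect, and this is a genuine gap, not a bookkeeping detail.

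You assert that ``summing over all renewal configurations of total length $n$ produces $(\mathcal{L}_\beta^n f)(x)$-type expressions'' and later that $Z^a_{n,\beta,h}\asymp e^{n(F+\delta)}(\mathcal{L}_{\beta,F}^n f_n)(x)$. This conflates two different indices: the exponent on $\mathcal{L}$ counts the \emph{number of renewals} (i.e.\ the number of symbols prepended, cf.\ Equation~(\ref{iter_operator_fct})), while the partition function at volume $n$ sums over a \emph{variable} number $k$ of renewals subject to the constraint $l_1+\cdots+l_k=n$. These are not the same object, and there is no choice of $f_n$ that makes your asymptotic identity hold. Indeed, if it did hold for every $F$, combining it with $(1/n)\log(\mathcal{L}_{\beta,F}^n f_n)(x)\to P_G(\phi_{\beta,F})$ would force $F^a(\beta,h)=F+\delta+P_G(\phi_{\beta,F})$ for all $F\ge 0$, hence $F+P_G(\phi_{\beta,F})$ would be constant in $F$; Proposition~\ref{fonction_P_G} only gives $\partial_F P_G(\phi_{\beta,F})\le -1$ (from Equation~(\ref{decr_P_G})), not equality, so this is inconsistent in general.

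What is actually missing is the Markov renewal reinterpretation. The paper (Step~2) uses the Ruelle--Perron--Frobenius eigenfunction $h_{\beta,F}$ to convert the operator weights into genuine transition probabilities $K_{\beta,F}(x,n)$ (Equation~(\ref{Kbeta})), so that the constrained sum over renewal configurations becomes the renewal mass function $P_{\beta,F(\delta)}(n\in\tau\,|\,\overline p)$ of a Markov renewal process on $\Sigma$. With $F(\delta)$ chosen so that $P_G(\phi_{\beta,F(\delta)})=-\delta$, one obtains two-sided bounds $c_\beta\, P_{\beta,F(\delta)}(n\in\tau\,|\,\overline p)\,e^{nF(\delta)}\le\mathcal{Z}^{\overline p}_{n,\beta,h}\le C_\beta\, e^{nF(\delta)}$ via item~(5) of Theorem~\ref{Ruelle_op_spec}. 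The upper bound already gives item~(1) when $\delta\le 0$. For item~(2) the remaining (and nontrivial) step is to show $(1/n)\log P_{\beta,F(\delta)}(n\in\tau\,|\,\overline p)\to 0$; the paper proves an approximate supermultiplicativity $P_{\overline p}(n+m\in\tau)\ge C\,P_{\overline p}(n\in\tau)P_{\overline p}(m\in\tau)$ (Equation~(\ref{almost_subadd})), again using $\sum n|\rho_n|<\infty$ and boundedness of $h_{\beta,F}$, so the limit of $(1/n)\log P_{\overline p}(n\in\tau)$ exists and is nonpositive, and then rules out a negative limit because it would contradict $\sum_n\delta_n=+\infty$ a.s. Your proposal contains no step playing the role of this subexponentiality argument for the renewal mass function, and the identity you propose as a substitute does not hold.

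Your circular-looking argument for item~(1) (invoking the characterization from item~(2) before it is established) should simply be replaced by the direct bound; your brief remark that ``one bounds $Z^a_{n,\beta,h}$ directly and shows subexponential growth'' is the right idea and is exactly what the paper does in Step~3.
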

Note that hypothesis $\sum n|\rho_n|<+\infty$ ensures that the annealed free energy, $P_G(\phi_{\beta})$ and $P_G(\phi_{\beta,F})$ are well defined (see Proposition \ref{existence} and Remark \ref{rmk_existence}). An immediate consequence of Proposition \ref{inf_caract} is:
\begin{cor}\label{cor}For all $\beta\geq0$,
 \begin{equation*}
  h_c^a(\beta) = -(\beta^2/2) - P_G(\phi_{\beta}).
 \end{equation*}
\end{cor}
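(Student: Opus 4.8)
} The plan is to read off the supremum defining $h_c^a(\beta)=\sup\{h : F^a(\beta,h)=0\}$ directly from Proposition \ref{inf_caract}, using Proposition \ref{fonction_P_G} to pin down the sign of $F^a(\beta,h)$ on either side of the threshold $-(\beta^2/2)-P_G(\phi_\beta)$. Throughout, the standing hypothesis $\sum_{n\geq1}n|\rho_n|<+\infty$ is in force, so that $F^a(\beta,\cdot)$, $P_G(\phi_\beta)$ and $P_G(\phi_{\beta,F})$ are all well defined (Proposition \ref{existence} and Remark \ref{rmk_existence}) and Propositions \ref{fonction_P_G} and \ref{inf_caract} apply.

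First I would establish the lower bound. Item (1) of Proposition \ref{inf_caract} gives $F^a(\beta,h)=0$ for every $h\leq -(\beta^2/2)-P_G(\phi_\beta)$, so the set $\{h\in\mathbb{R} : F^a(\beta,h)=0\}$ contains the whole half-line $(-\infty,\,-(\beta^2/2)-P_G(\phi_\beta)]$, whence $h_c^a(\beta)\geq -(\beta^2/2)-P_G(\phi_\beta)$.

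Then I would prove the matching upper bound by showing the annealed free energy is strictly positive past the threshold. Fix $h>-(\beta^2/2)-P_G(\phi_\beta)$ and set $\delta:=h+(\beta^2/2)+P_G(\phi_\beta)>0$. By Proposition \ref{fonction_P_G}, the map $F\in\mathbb{R}_+\mapsto P_G(\phi_{\beta,F})$ is continuous, strictly decreasing, equal to $0$ at $F=0$, and tends to $-\infty$; hence the equation $P_G(\phi_{\beta,F})=-\delta$ has a unique root, and this root is strictly positive. Item (2) of Proposition \ref{inf_caract} identifies $F^a(\beta,h)$ with this root, so $F^a(\beta,h)>0$. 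Thus no $h>-(\beta^2/2)-P_G(\phi_\beta)$ lies in $\{h : F^a(\beta,h)=0\}$, which forces $h_c^a(\beta)\leq -(\beta^2/2)-P_G(\phi_\beta)$. Combining the two bounds yields the claimed identity.

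There is no genuine obstacle once Proposition \ref{inf_caract} is available: the corollary is just the remark that Proposition \ref{inf_caract} describes precisely where $F^a(\beta,\cdot)$ vanishes. The only point requiring slight care is the strictness in the second step — one must invoke that $F\mapsto P_G(\phi_{\beta,F})$ is \emph{strictly} decreasing starting from the value $0$ at $F=0$ (Proposition \ref{fonction_P_G}, resting on Equation (\ref{decr_P_G}) and the convexity argument), so that a positive $\delta$ really produces a positive root rather than the root $F=0$.
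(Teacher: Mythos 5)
Your argument is correct and is exactly the reasoning the paper leaves implicit when it calls the corollary "an immediate consequence" of Proposition \ref{inf_caract}: item (1) gives the lower bound on $h_c^a(\beta)$, and item (2) together with the strict monotonicity of $F\mapsto P_G(\phi_{\beta,F})$ from Proposition \ref{fonction_P_G} gives strict positivity of $F^a(\beta,h)$ above the threshold, hence the upper bound. No difference in approach worth noting.
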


\begin{proof}[Proof of Proposition \ref{inf_caract}]
 Let us first point out that Equation (\ref{inf_equ1}) has indeed a unique positive solution because of Proposition \ref{fonction_P_G}. We slightly modify the expression of the annealed partition function by setting:
\begin{equation*}
 Z_{n,\beta,h}^{a} := E\left( \exp\left(\left(h+\frac{\beta^2}{2}\right)\sum_{k=1}^n \delta_k + \beta^2 \sum_{0\leq k < l \leq n} \rho_{l-k} \delta_k \delta_l\right) \delta_n\right),
\end{equation*}
(the second sum starts at $k=0$ instead of $k=1$) which does not affect the value of the annealed free energy.

\textit{Step 1: Adding a past.} In the following we call ``past'' a sequence of positive integers placed before $\tau_0 = 0$. We then get for any past $\overline{p}$ in $\Sigma$:
\begin{equation}\label{tronque}
 Z^{a}_{n,\beta,h} = \sum_{k=1}^n \displaystyle\sum_{\substack{l_1,\ldots,l_k \geq 1\\l_1 + \ldots + l_k = n}} \prod_{i=1}^k \left( e^{h+\frac{\beta^2}{2}+ \beta^2G^{(i)}(l_i l_{i-1}\ldots l_1 \overline{p})}K(l_i)\right).
\end{equation}
The inconvenient of this expression is that an infinite number of potential functions appear (the $G^{(i)}$'s, see Equation (\ref{truncated_G1})). Now, by analogy with the finite range correlations case (see \cite{0903.3704v3}), one would like to relate the annealed partition function $Z^{a}_{n,\beta,h}$ with the spectral properties of a unique operator (thus associated to a unique potential). The next step therefore consists in replacing the $G^{(i)}$'s by the potential $G$ (see Equation (\ref{def_potentialG})), that is replacing an infinite number of potentials with finite memory (which means that their value does not depend on the whole sequence but only on a finite set of index) by a unique potential with infinite memory. Since we assume that $\sum_{n\geq1}n|\rho_n|$ is finite and that the variations of $G$ are summable, the error produced by this operation tends to $0$ as $n$ goes to $+\infty$. For all past $\overline{p}$ in $\Sigma$, let us define: 
\begin{equation}\label{nontronque}
 \mathcal{Z}_{n,\beta,h}^{\overline{p}} := \sum_{k=1}^n \displaystyle\sum_{\substack{l_1,\ldots,l_k \geq 1\\l_1 + \ldots + l_k = n}} \prod_{i=1}^k \left( e^{h+\frac{\beta^2}{2}+ \beta^2G(l_i l_{i-1}\ldots l_1 \overline{p})}K(l_i)\right).
\end{equation}
Under the assumptions of the proposition, we have for all $\beta$ and $h$:
 \begin{equation}\label{limite_passe}
  F^a(\beta,h) = \lim_{n\rightarrow +\infty} (1/n) \log \mathcal{Z}_{n,\beta,h}^{\overline{p}}.
 \end{equation}
 Indeed, for all $i\geq1$, $l_1,\ldots,l_i\geq1$ and $\overline{p}$ in $\Sigma$, we have
 \begin{equation*}
  |G(l_i\ldots l_1\overline{p}) - G^{(i)}(l_i\ldots l_1\overline{p})| = \left|\sum_{k\geq0} \rho_{l_i + \ldots + l_1 + p_0 + \ldots + p_k} \right| \leq \sum_{k\geq i}|\rho_k|
 \end{equation*}
and from Equations (\ref{tronque}) and (\ref{nontronque}), we get the bounds
 \begin{equation}\label{encadr}
 e^{-\beta^2 \sum_{n\geq 1} n|\rho_n|}\mathcal{Z}_{n,\beta,h}^{\overline{p}}\leq Z^{a}_{n,\beta,h} \leq \mathcal{Z}_{n,\beta,h}^{\overline{p}} e^{\beta^2  \sum_{n\geq 1} n|\rho_n|},
 \end{equation}
from which it is straightforward to deduce Equation (\ref{limite_passe}). From now on, we can therefore work with $(\mathcal{Z}_{n,\beta,h}^{\overline{p}})_{n\geq1}$.

\textit{Step 2: Application of Theorem \ref{Ruelle_op_spec} and induced transition probabilities.} As $\sum_{n\geq1} n|\rho_n|$ is finite, the variations of $\phi_{\beta}$ (resp. $\phi_{\beta,F}$) are summable (Proposition \ref{lien_variations}), so we can apply Theorem \ref{Ruelle_op_spec}. Let us denote by $h_{\beta}$ and $\nu_{\beta}$ (resp. $h_{\beta,F}$ and $\nu_{\beta,F}$) the eigenfunction and eigenmeasure obtained thereby, normalized in such a way that $\int_{\Sigma} h_{\beta} d\nu_{\beta} = 1$ (resp. $\int_{\Sigma} h_{\beta,F} d\nu_{\beta,F} = 1$) and denote by $dm_{\beta} = h_{\beta} d\nu_{\beta}$ (resp. $dm_{\beta,F} = h_{\beta,F} d\nu_{\beta,F}$) the associated Gibbs measure. The induced transition probabilities, which were defined in Equation (\ref{chaine_induite}) in the general case, are written $(Q_{\beta}(\overline{s},\overline{t}))_{\overline{s},\overline{t} \in \Sigma}$ (resp. $(Q_{\beta,F}(\overline{s},\overline{t}))_{\overline{s},\overline{t} \in \Sigma}$), and the law of the Markov chain on $\Sigma$ associated to these transition probabilities, starting from $\overline{p}$, is denoted by $P_{\beta}(\cdot | \overline{p})$ (resp. $P_{\beta,F}(\cdot | \overline{p})$). One also defines
\begin{align*} 
 P_{\beta}(\cdot) &= \int_{\Sigma} P_{\beta}(\cdot | \overline{p}) dm_{\beta}(\overline{p})\\ \mbox{ (resp. } P_{\beta,F}(\cdot) &= \int_{\Sigma} P_{\beta,F}(\cdot | \overline{p}) dm_{\beta,F}(\overline{p})\mbox{)}.
 \end{align*} 
 Theses distributions can also be seen as distributions on the process $\tau$. Indeed, since $Q_{\beta}(x,y) >0$ if and only if $Ty = x$, one may define for all $x$ in $\Sigma$ and all $n\geq1$, 
\begin{equation}\label{Kbeta}
 K_{\beta}(x,n) := Q_{\beta}(x,nx) = \frac{\exp(\beta^2 G(nx))K(n)h_{\beta}(nx)}{\exp(P_G(\phi_{\beta}))h_{\beta}(x)}.
\end{equation}
Then, for all $x$ in $\Sigma$, $\sum_{n\geq 1} K_{\beta}(x,n) =1$, and for all $\overline{p}$ in $\Sigma$, $n\geq1$, and $l_1,\ldots,l_n \geq1$,
\begin{align}\label{Pbeta}
 &P_{\beta}(\tau_1 = l_1, \tau_2 = l_1 + l_2, \ldots, \tau_n = l_1 +\ldots + l_{n-1}+l_n | \overline{p}) \nonumber\\&= K_{\beta}(\overline{p},l_1)K_{\beta}(l_1\overline{p},l_2)\ldots K_{\beta}(l_{n-1}\ldots l_1 \overline{p},l_n)
\end{align}
Under $P_{\beta}(\cdot|\overline{p})$, the process $\tau = (\tau_n)_{n\geq0}$ has infinite range memory.\\
\textit{Step 3: Proof of item (1).} Let us consider the case $h\leq -(\beta^2/2) - P_G(\phi_{\beta})$ and show that $F^a(\beta,h) = 0$. Since free energy is nondecreasing in $h$, it is enough to prove that $$F^a(\beta, -\beta^2/2 - P_G(\phi_{\beta})) = 0.$$ For any past $\overline{p}$ we have
\begin{align*}
 &\mathcal{Z}_{n,\beta,-\beta^2/2 - P_G(\phi_{\beta})}^{\overline{p}} \\&= \sum_{k=1}^n \sum_{\substack{l_1,\ldots, l_k \geq 1\\ l_1 + \ldots + l_k = n}} \prod_{i=1}^k e^{\beta^2 G(l_i\ldots l_1 \overline{p}) - P_G(\phi_{\beta})}K(l_i)\\
 &  = \sum_{k=1}^n \sum_{\substack{l_1,\ldots, l_k \geq 1\\ l_1 + \ldots + l_k = n}} \frac{h_{\beta}(\overline{p})}{h_{\beta}(l_k\ldots l_1 \overline{p})}\prod_{i=1}^k K_{\beta}(l_{i-1}\ldots \overline{p},l_i) \mbox{\rm \, where $l_0\overline{p} := \overline{p}$\,} \\
 & \leq C_{\beta} \sum_{k=1}^n \sum_{\substack{l_1,\ldots, l_k \geq 1\\ l_1 + \ldots + l_k = n}} \prod_{i=1}^k K_{\beta}(l_{i-1}\ldots \overline{p},l_i)\mbox{\rm \, from item (5) of Theorem \ref{Ruelle_op_spec}}\\
 & = C_{\beta} P_{\beta}(n\in\tau | \overline{p})\\
 & \leq C_{\beta},
\end{align*}
then we use Equation (\ref{limite_passe}) to conclude.

\textit{Step 4: Proof of item (2).} Now we deal with the case $h = -\frac{\beta^2}{2} - P_G(\phi_{\beta}) + \delta$ when $\delta>0$. Let us denote by $F(\delta)$ the solution to Equation (\ref{inf_equ1}). By analogy with Equation (\ref{Kbeta}), we define for all $x$ in $\Sigma$ and $n\geq1$:
\begin{equation*}
 K_{\beta,F(\delta)}(x,n) = \exp(\delta + \beta^2 G(nx) - P_G(\phi_{\beta}) - F(\delta)n)K(n)\frac{h_{\beta,F(\delta)}(nx)}{h_{\beta,F(\delta)}(x)},
\end{equation*}
and for all past $\overline{p}$, $P_{\beta,F(\delta)}(\cdot|\overline{p})$ is the law on $\tau$ which is associated to the transition probabilities $K_{\beta,F(\delta)}$ (as in Equation (\ref{Pbeta})). Then we have:
\begin{align*}
 \mathcal{Z}_{n,\beta,h}^{\overline{p}} &= \sum_{k=1}^n \sum_{\substack{l_1,\ldots, l_k \geq 1\\ l_1 + \ldots + l_k = n}} \prod_{i=1}^k e^{\delta + \beta^2 G(l_i\ldots l_1 \overline{p}) - P_G(\phi_{\beta})}K(l_i)\\
 & = \exp(nF(\delta)) \sum_{k=1}^n \sum_{\substack{l_1,\ldots, l_k \geq 1\\ l_1 + \ldots + l_k = n}} \prod_{i=1}^k e^{\delta + \beta^2 G(l_i\ldots l_1 \overline{p}) - P_G(\phi_{\beta}) - F(\delta)l_i}K(l_i)\\
 & = \exp(nF(\delta)) \sum_{k=1}^n \sum_{\substack{l_1,\ldots, l_k \geq 1\\ l_1 + \ldots + l_k = n}} \frac{h_{\beta,F(\delta)}(\overline{p})}{h_{\beta,F(\delta)}(l_k\ldots l_1 \overline{p})} \prod_{i=1}^k K_{\beta,F(\delta)}(l_{i-1}\ldots \overline{p}, l_i).
\end{align*}
From item (5) of Theorem \ref{Ruelle_op_spec} and the fact that
\begin{equation*}
P_{\beta,F(\delta)}(n\in\tau | \overline{p}) = \sum_{k=1}^n \sum_{\substack{l_1,\ldots, l_k \geq 1\\ l_1 + \ldots + l_k = n}} \prod_{i=1}^k K_{\beta,F(\delta)}(l_{i-1}\ldots \overline{p}, l_i),
\end{equation*}
there exist two constants $0<c_{\beta}\leq C_{\beta}< +\infty$ such that
\begin{equation*}
 c_{\beta}P_{\beta,F(\delta)}(n\in\tau | \overline{p})\exp(F(\delta)n) \leq \mathcal{Z}_{n,\beta,h}^{\overline{p}} \leq C_{\beta}\exp(F(\delta)n).
\end{equation*}
Therefore, in order to finish the proof it is enough to show that
\begin{equation*}
(1/n)\log P_{\beta,F(\delta)}(n\in\tau | \overline{p}) \stackrel{n\rightarrow +\infty}{\longrightarrow} 0.
\end{equation*}
Now, fix $\beta>0$ and $\delta>0$, and let us write $P_{\overline{p}}$ instead of $P_{\beta,F(\delta)}(\cdot | \overline{p})$ to shorten notations. We show that there exists a constant $C>0$ such that for all $m,n\geq 1$,
 \begin{equation}\label{almost_subadd}
  P_{\overline{p}}(m+n\in\tau) \geq C P_{\overline{p}}(n\in\tau)P_{\overline{p}}(m\in\tau).
 \end{equation}
Multiplying both sides of the last equation by $C$, one can use a standard subadditivity argument to prove that the sequence $\left((1/n) \log P_{\overline{p}}(n\in\tau) \right)_{n\geq 1}$ converges to a limit which is clearly nonpositive. But if it were negative then we would get
\begin{equation*}
 E_{\overline{p}}\left(\sum_{n\geq 1} \delta_n\right) = \sum_{n\geq1} P_{\overline{p}}(n\in\tau) < +\infty,
\end{equation*}
which is absurd since $\sum_{n\geq1}\delta_n = +\infty$ almost surely. Thus the limit is $0$. Let us now prove Equation (\ref{almost_subadd}). Let $m,n\geq 1$. Then:
\begin{align*}
 &P_{\overline{p}}(n+m\in\tau) \\& \geq  P_{\overline{p}}(n\in\tau, n+m\in\tau) \\
 & = \sum_{\substack{1\leq k\leq n\\ 1\leq p\leq m}} \sum_{\substack{l_1+ \ldots + l_k = n\\j_1+\ldots+j_p = m}}  K_{\beta,F(\delta)}(\overline{p},l_1) K_{\beta,F(\delta)}(l_1\overline{p},l_2)\ldots K_{\beta,F(\delta)}(l_{n-1}\ldots l_1\overline{p},l_n)\\  &\hspace{3.5cm} \times K_{\beta,F(\delta)}(l_n\ldots l_1 \overline{p}, j_1)\ldots K_{\beta,F(\delta)}(j_{p-1} \ldots j_1 l_n\ldots l_1 \overline{p}, j_p).
 \end{align*}
But for all $i\geq 1$,
\begin{equation*}
 |G(j_i \ldots j_1 l_k \ldots l_1 \overline{p}) - G(j_i \ldots j_1 \overline{p})| \leq 2 \sum_{k\geq i} |\rho_k|.
\end{equation*}
Since we assumed $c = \sum_{i\geq 1} \sum_{k\geq i} |\rho_k| < +\infty$, we can set $$C = e^{-2c\beta^2 } \times \inf_{x,y\in\Sigma}\{h_{\beta,F(\delta)}(y)/h_{\beta,F(\delta)}(x)\},$$ which is a finite and positive constant (from item (5) of Theorem \ref{Ruelle_op_spec}), so we have:
\begin{align*}
 &P_{\overline{p}}(n+m\in\tau) \\& \geq  C \sum_{\substack{1\leq k\leq n\\ 1\leq p\leq m}} \sum_{\substack{l_1+ \ldots + l_k = n\\j_1+\ldots+j_p = m}} K_{\beta,F(\delta)}(\overline{p},l_1)K_{\beta,F(\delta)}(l_1\overline{p},l_2)\ldots K_{\beta,F(\delta)}(l_{n-1}\ldots l_1\overline{p},l_n)\\  &\hspace{3.5cm} \times K_{\beta,F(\delta)}(\overline{p}, j_1)\ldots K_{\beta,F(\delta)}(j_{p-1} \ldots j_1  \overline{p}, j_p)\\
 & \geq C P_{\overline{p}}(n\in\tau) P_{\overline{p}}(m\in\tau),
\end{align*}
which concludes the proof.
\end{proof}

\begin{proof}[Proof of Theorem \ref{critical_exp}]
 As we know from Proposition \ref{lien_variations}, if the $\rho_n$'s decrease exponentially fast then the same holds for the variations of the potentials $\phi_{\beta}$ and $\phi_{\beta,F}$, which are then H\"older continuous. Assume that $$h=h_c^a(\beta) + \delta = -(\beta^2/2) - P_G(\phi_{\beta}) + \delta$$ with $\delta>0$. From item (2) of Proposition \ref{inf_caract}, $F(\delta) := F^a(\beta,h)$ satisfies
\begin{equation}\label{inf_caract_equ}
 P_G(\phi_{\beta} - P_G(\phi_{\beta}) - F(\delta)\pi_0) = -\delta.
\end{equation}
We now apply Theorem \ref{Gurevich_pressure_critical_exp} with $\phi_{\beta} - P_G(\phi_{\beta})$ instead of $\phi$, and $-\pi_0$ instead of $\psi$, to obtain asymptotics for the left-hand term of Equation (\ref{inf_caract_equ}) when $\delta\searrow 0$, i.e $F(\delta)\searrow 0$. By ``inverting'' these asymptotics (see  \cite[Theorem 1.5.12]{MR1015093}), we get the critical behaviour of $F(\delta)$ as $\delta$ goes to $0$. 

In order to find the asymptotic behaviour of $m_{\beta}([n])$), which is needed to apply Theorem \ref{Gurevich_pressure_critical_exp}, we show that for all $x$ in $\Sigma$, the sequence $(h_{\beta}(nx))_{n\geq1}$ converges to a positive constant (not depending on $x$) that we denote by $\overline{h}_{\beta}$. Indeed, let us choose an arbitrary $a\geq 1$ and define for all $x$ in $\Sigma$ and $l\geq1$:
\begin{equation*}
h_{\beta}^{(l)}(x) := \exp(-lP_G(\phi_{\beta})) \left(\mathcal{L}^l_{\beta}\mathbf{1}_{[a]} \right)(x).
\end{equation*}
Let $m\geq n \geq 1$ be two integers. Then for all $l\geq1$, we have
\begin{equation*}
 e^{-2\beta^2 \sum_{j\geq n}\sum_{i\geq j}|\rho_i|}h_{\beta}^{(l)}(mx)\leq h_{\beta}^{(l)}(nx)\leq h_{\beta}^{(l)}(mx)  e^{2\beta^2 \sum_{j\geq n}\sum_{i\geq j}|\rho_i|}.
\end{equation*}
By making $l$ go to $+\infty$, using item (6) of Theorem \ref{Ruelle_op_spec} and taking the logarithm, one obtains:
\begin{equation*}
 \left| \log h_{\beta}(mx) - \log h_{\beta}(nx) \right| \leq 2\beta^2 \sum_{j\geq n} \sum_{i\geq j} |\rho_i| \stackrel{n \rightarrow + \infty}{\longrightarrow} 0.
\end{equation*}
The sequence $(\log h_{\beta}(nx))_{n\geq1}$ is therefore a Cauchy sequence. As a consequence, $(h_{\beta}(nx))_{n\geq1}$ converges to a finite positive limit. Moreover, it does not depend on $x$ since for all $x$ and $y$, we have
\begin{equation*}
 \left| \log h_{\beta}(nx) - \log h_{\beta}(ny) \right| \leq 2\beta^2 \sum_{j\geq n} \sum_{i\geq j} |\rho_i|.
\end{equation*}
By invariance of $m_{\beta}$ under $Q_{\beta}$, we have:
\begin{align}
 m_{\beta}([n]) &= \int Q_{\beta}(\overline{p},n\overline{p})dm_{\beta}(\overline{p})\nonumber\\&= \int e^{\beta^2G(n\overline{p}) - P_G(\phi_{\beta})} K(n) h_{\beta}(n\overline{p})d\nu_{\beta}(\overline{p})\label{mbeta_n},
\end{align}
which, by Dominated Convergence, gives
\begin{equation}\label{equ_mbeta}
 m_{\beta}([n]) \stackrel{n\rightarrow+\infty}{\sim} e^{-P_G(\phi_{\beta})}\overline{h}_{\beta}\nu_{\beta}(\Sigma) \times K(n) := c_{\beta} K(n).
\end{equation}
Using Theorem \ref{Gurevich_pressure_critical_exp} knowing Equations (\ref{equ_mbeta}) and (\ref{defK}), we get:\\
\begin{enumerate}
\item if $0<\alpha<1$, then $$\delta \sim_0 c'_{\beta} L(1/F(\delta))F(\delta)^{\alpha};$$
\item if $m =\sum_{n\geq 1} nK(n)<+\infty$ then $$\delta \sim_0 c'_{\beta} F(\delta);$$
\item if $\alpha=1$ and $\sum_{n\geq 1} nK(n)=+\infty$ then $$\delta \sim_0 c'_{\beta} \tilde{L}(1/F(\delta))F(\delta);$$
\end{enumerate}
where $c'_{\beta}$ is a positive constant (not the same from one case to another) and $\tilde{L}$ a slowly varying function. In order to get the asymptotic behaviour of $F(\delta)$ w.r.t $\delta$, we use \cite[Theorem 1.5.12]{MR1015093}.

Notice that when $$\sum_{n\geq1}nK(n)<+\infty,$$ one has
\begin{equation*}
 F(\delta) \stackrel{\delta\searrow 0}{\sim} \left( \int \pi_0 dm_{\beta} \right)^{-1} \delta,
\end{equation*}
and the constant $ \left( \int \pi_0 dm_{\beta} \right)^{-1}$ can be interpreted as a limit contact fraction when $\tau$ has distribution $m_{\beta}$.
\end{proof}

\begin{proof}[Proof of Theorem \ref{critical_sum}]
 We first deal with the case $\sum_{n\geq1}nK(n)<+\infty$. With the same techniques as in the proof of Proposition \ref{inf_caract}, and by considering \textit{free} rather than \textit{pinned} partition functions (see Remark \ref{pinned_free}), one can prove that for all $\delta>0$, if $h = h_c^a(\beta) + \delta = -(\beta^2/2) - P_G(\phi_{\beta})+\delta$, then
 \begin{equation*}
  \mathcal{Z}_{n,\beta,h}^{\overline{p}} \geq c_{\beta} E_{\beta}\left(\exp(\delta \imath_n)| \overline{p} \right)
 \end{equation*}
and by integrating on $\overline{p}$ and using Jensen's inequality:
\begin{equation}\label{borneinf}
 \frac{1}{n}\log\int  \mathcal{Z}_{n,\beta,h}^{\overline{p}}dm_{\beta}(\overline{p}) \geq \delta E_{\beta}(\imath_n/n) + o(1).
\end{equation}
Besides, from our assumptions, $\pi_0$ is $m_{\beta}$-integrable. Indeed,
\begin{align*}
\int \pi_0 dm_{\beta} &=  \sum_{n\geq1} n\times m_{\beta}(\{x : \pi_0(x) = n\})\\& = \sum_{n\geq1} n \int K(n)\frac{e^{\beta^2 G(n\overline{p})}}{e^{P_G(\beta)}}\frac{h_{\beta}(n\overline{p})}{h_{\beta}(\overline{p})}dm_{\beta}(\overline{p}) \mbox{\rm \, from Equation (\ref{mbeta_n})}\\&\leq C_{\beta} \sum_{n\geq 1} nK(n) < +\infty \mbox{\rm \, from item (5) of Theorem. \ref{Ruelle_op_spec}.}
\end{align*}
Since $m_{\beta}$ is ergodic, Birkhoff's Theorem tells us that 
\begin{equation*}
\frac{\tau_n}{n} = \frac{1}{n}\sum_{i=1}^n T_i \stackrel{n\rightarrow +\infty}{\longrightarrow} \int \pi_0 dm_{\beta} \quad \mbox{\rm $m_{\beta}$-a.s.}
\end{equation*}
From the bounds
\begin{equation*}
 \tau_{\imath_n} \leq n <  \tau_{\imath_n + 1},
\end{equation*}
we get that $m_{\beta}$-a.s (and in $L^1(m_{\beta})$ by Dominated Convergence),
\begin{equation}\label{cvg_mbeta}
 \frac{\imath_n}{n} \stackrel{n\rightarrow + \infty}{\longrightarrow} \left(\int\pi_0 dm_{\beta}\right)^{-1} > 0.
\end{equation}
Notice that the convergence in Equation (\ref{limite_passe}) remains true if we replace $\mathcal{Z}_n^{\overline{p}}$ by $\int \mathcal{Z}_n^{\overline{p}}dm_{\beta}(\overline{p})$, from the bounds of Equation (\ref{encadr}). By taking the large $n$ limit  in Equation (\ref{borneinf}) and using Equation (\ref{cvg_mbeta}), we get
\begin{equation*}
 F^a(\beta,h_c^a(\beta) + \delta) \geq \left(\int \pi_0 dm_{\beta} \right)^{-1}\delta.
\end{equation*}
The upper bound in Equation (\ref{critical_sum_moy}) comes from the following inequality:
$$(h_c^a(\beta) + \delta) \imath_n \leq h_c^a(\beta)\imath_n + \delta n.$$

We now come back to \textit{pinned} partition functions and deal with the other cases. In what follows, the parameter $\beta>0$ is kept fixed. The idea of the proof is to get bounds on $P_G(\phi_{\beta,F})$ for small values of $F$ by applying the operator $\mathcal{L}_{\beta,F}$ to the function $h_{\beta}/(h_{\beta}\circ T)$, where $h_{\beta}$ is the eigenfunction of the operator $\mathcal{L}_{\beta}$ associated to the eigenvalue $\exp(P_G(\phi_{\beta}))$, which is given by Theorem \ref{Ruelle_op_spec}. Indeed, we have for all $x$ in $\Sigma$,
\begin{align*}
 \left(\mathcal{L}_{\beta,F}\frac{h_{\beta}}{h_{\beta}\circ T}\right)(x) &= \sum_{n\geq1} e^{\beta^2 G(nx) - P_G(\phi_{\beta})}\frac{h_{\beta}(nx)}{h_{\beta}(x)}K(n)e^{-nF}\\
 & = \sum_{n\geq1} e^{\beta^2 G(nx) - P_G(\phi_{\beta})}\frac{h_{\beta}(nx)}{h_{\beta}(x)}K(n) \\&\hspace{1cm}+ \sum_{n\geq1} e^{\beta^2 G(nx) - P_G(\phi_{\beta})}\frac{h_{\beta}(nx)}{h_{\beta}(x)}K(n)(e^{-nF}-1).
\end{align*}
The sum in the second line is equal to $1$ by definition of $h_{\beta}$, whereas by boundedness of $G$ and item (5) of Theorem \ref{Ruelle_op_spec}, the sum in the last line is bounded above and below by a constant (which does not depend on $x$) times
\begin{equation*}
 \vartheta(F) : = \sum_{n\geq1} K(n)(e^{-nF}-1).
\end{equation*}
We then prove by induction that for small enough values of $F$, there exists a constant $C$ such that for all $n\geq1$ and for all $x$ in $\Sigma$,  we have
\begin{align}
 \left(\mathcal{L}^n_{\beta,F}\frac{h_{\beta}}{h_{\beta}\circ T^n} \right)(x) &\leq (1 + C \vartheta(F))^n \label{rec_1}\\
 \mbox{\rm  and} \quad \left(\mathcal{L}^n_{\beta,F}\frac{h_{\beta}}{h_{\beta}\circ T^n} \right)(x) &\geq (1 - (1/C) \vartheta(F))^n,\label{rec_2}
\end{align}
which is what we have just proved above for $n=1$ and all $x$. Suppose Equations (\ref{rec_1}) and (\ref{rec_2}) hold for some $n\geq1$ and all $x$. We prove that both relations hold for $n+1$. Let $x\in\Sigma$. Recall Equations (\ref{phi_n}) and (\ref{iter_operator_fct}). We have
\begin{align*}
 \left(\mathcal{L}^{n+1}_{\beta,F}\frac{h_{\beta}}{h_{\beta}\circ T^{n+1}} \right)(x) &= \sum_{k_1,\ldots,k_{n+1}\geq1}e^{\phi_{\beta,F,n+1}(k_{n+1}\ldots k_1 x)}\frac{h_{\beta}(k_{n+1}\ldots k_1 x)}{h_{\beta}(x)}\\
 & = \sum_{k_1,\ldots,k_n\geq1}e^{\phi_{\beta,F,n}(k_n \ldots k_1x)}\frac{h_{\beta}(k_n \ldots k_1 x)}{h_{\beta}(x)}\\ &\hspace{5mm}\times\sum_{k_{n+1}\geq 1} e^{\phi_{\beta,F}(k_{n+1}k_n \ldots k_1 x)}\frac{h_{\beta}(k_{n+1}k_n \ldots k_1 x)}{h_{\beta}(k_n \ldots k_1 x)}
\end{align*}
Using Equations (\ref{rec_1}) and (\ref{rec_2}) for $n=1$ and $x$ replaced by $(k_n\ldots k_1 x)$, one gets
\begin{align*}
 \left(\mathcal{L}^{n+1}_{\beta,F}\frac{h_{\beta}}{h_{\beta}\circ T^{n+1}} \right)(x) &\leq (1 + C\vartheta(F)) \sum_{k_1,\ldots,k_n\geq1}e^{\phi_{\beta,F,n}(k_n \ldots k_1x)}\frac{h_{\beta}(k_n \ldots k_1 x)}{h_{\beta}(x)}\\
 & = (1 + C\vartheta(F)) \left(\mathcal{L}^n_{\beta,F}\frac{h_{\beta}}{h_{\beta}\circ T^n} \right)(x)\\
 &\leq (1 + C\vartheta(F))^{n+1} \mbox{ by induction hypothesis.}
\end{align*}
We prove in the same way that 
\begin{equation*}
\left(\mathcal{L}^{n+1}_{\beta,F}\frac{h_{\beta}}{h_{\beta}\circ T^{n+1}} \right)(x) \geq (1 - (1/C)\vartheta(F))^{n+1},
\end{equation*}
which ends the induction. Combining Equations (\ref{rec_1}) and (\ref{rec_2}) with Equation (\ref{alter_def2}) (recall that $0<\inf h_{\beta} \leq \sup h_{\beta} < + \infty$), we get for small values of $F$:
\begin{equation}\label{ingr1}
 - (1/C) \vartheta(F) \leq P_G(\phi_{\beta,F}) \leq - C \vartheta(F).
\end{equation}
Since $K(n) = L(n)n^{-(1+\alpha)}$, we get (see \cite[Corollary 8.1.7]{MR1015093})
\begin{equation}\label{ingr2}
 \vartheta(F) \sim cL(1/F)F^{\alpha} 
\end{equation}
for some constant $c$. Combining Equations (\ref{ingr1}) and (\ref{ingr2}) applied to $F = F^a(\beta,h_c^a(\beta)+\delta)$ for small positive values of $\delta$, with item (2) of Proposition \ref{inf_caract} and \cite[Theorem 1.5.12]{MR1015093}, we get the result.
\end{proof}

\section{Proof of Theorem \ref{asympt}}
\begin{proof} 
Recall Equation (\ref{def_potentialG}). For every sequence of positive integers $\overline{t}=(t_i)_{i\geq 0}$ and for all $q\geq 1$, let us define 
\begin{equation}\label{truncated_G}
 G^{[q]}(\overline{t}) := \sum_{n\geq0} \rho_{t_0 + \ldots + t_n} \mathbf{1}_{\{t_0 + \ldots + t_n \leq q\}}.
\end{equation}
Since
\begin{equation*}
 \left| G^{[q]}(\overline{t}) - G(\overline{t}) \right| \leq  \sum_{k\geq q+1} |\rho_k|,
\end{equation*}
we have
\begin{equation*}
F^{a,q}\left(\beta, h - \beta^2 \sum_{k\geq q+1} |\rho_k| \right) \leq F^a(\beta, h) \leq F^{a,q}\left(\beta, h + \beta^2 \sum_{k\geq q+1} |\rho_k| \right),
\end{equation*}
where $F^{a,q}$ (resp. $h_c^{a,q}$) is the free energy (resp. critical curve) of the annealed model with $\rho_n$ replaced by $\rho_n\mathbf{1}_{\{n\leq q\}}$. Hence, we have
\begin{equation*}
 h_c^{a,q}(\beta) - \beta^2 \sum_{k\geq q+1}|\rho_k| \leq h_c^a(\beta) \leq h_c^{a,q}(\beta) + \beta^2 \sum_{k\geq q+1}|\rho_k|
\end{equation*}
from which we deduce using \cite[Proposition 4.2]{0903.3704v3},
\begin{align*}
 \limsup_{\beta\searrow 0} \frac{h_c^a(\beta)}{(\beta^2/2)} &\leq -\left(1+ 2\sum_{k=1}^q \rho_k P(k\in\tau) - 2 \sum_{k\geq q+1}|\rho_k|\right),\\
  \liminf_{\beta\searrow 0} \frac{h_c^a(\beta)}{(\beta^2/2)} &\geq -\left(1+ 2\sum_{k=1}^q \rho_k P(k\in\tau) + 2\sum_{k\geq q+1}|\rho_k|\right).
\end{align*}
We conclude by making $q$ tend to $+\infty$.
\end{proof}

If $m = \sum_{n\geq1}nK(n) <+\infty$, we can also prove the upper bound by applying Jensen's inequality on the annealed partition function. We get indeed by considering \textit{free} partition functions (see Remark \ref{pinned_free}):
\begin{equation*}
 Z^a_{n,\beta,h} \geq \exp\left(\left(h + \frac{\beta^2}{2} \right)\sum_{k=1}^n P(k\in\tau) + \beta^2 \sum_{1\leq k < l \leq n} \rho_{l-k}P(k\in\tau)P(l-k\in\tau)\right),
\end{equation*}
which gives
\begin{equation*}
 \frac{1}{n}\log  Z^a_{n,\beta,h} \geq \left(h + \frac{\beta^2}{2} \right)\frac{1}{n}\sum_{k=1}^n P(k\in\tau) + \frac{\beta^2}{n}\sum_{k=1}^{n-1}P(k\in\tau)\sum_{l=1}^{n-k} \rho_l P(l\in\tau).
\end{equation*}
Take the large $n$ limit and use the Renewal Theorem to get:
\begin{equation*}
 F^a(\beta,h) \geq \frac{1}{m}\left(h + \frac{\beta^2}{2}\left(1 + 2\sum_{n\geq 1}\rho_n P(n\in\tau) \right) \right),
\end{equation*}
and so, for all $\beta\geq0$, for all $h > -\frac{\beta^2}{2}\left(1 + 2\sum_{n\geq 1}\rho_n P(n\in\tau) \right)$, we have $F^a(\beta,h)>0$, which means that:
\begin{equation*}
 \forall \beta\geq0,\quad h_c^a(\beta) \leq -\frac{\beta^2}{2}\left(1 + 2\sum_{n\geq 1}\rho_n P(n\in\tau) \right).
\end{equation*}
Note that in this case the upper bound holds for all $\beta$.

\bigskip

{\bf Acknowledgements.} The author is grateful to an anonymous referee for valuable comments and improvement on Proposition \ref{existence}.

\bibliographystyle{spmpsci.bst}      
\bibliography{references.bib}
\end{document}